\documentclass[reqno]{amsart}
\usepackage{amssymb}
\usepackage{graphicx}

\usepackage[usenames, dvipsnames]{color}
\usepackage{verbatim}
\usepackage{mathrsfs}
\usepackage{bm}
\usepackage{cite}

\numberwithin{equation}{section}

\newtheorem{theorem}{Theorem}[section]

\newtheorem{lemma}[theorem]{Lemma}
\newtheorem{prop}[theorem]{Proposition}

\theoremstyle{definition}
\newtheorem{remark}[theorem]{Remark}

\theoremstyle{definition}

\theoremstyle{definition}

\makeatletter
\def\dashint{\operatorname%
{\,\,\text{\bf-}\kern-.98em\DOTSI\intop\ilimits@\!\!}}
\makeatother

\def\\det{\text{\det}}

\def\.5{\frac{1}{2}}

\newcommand{\RN}[1]{%
  \textup{\uppercase\expandafter{\romannumeral#1}}%
}

\renewcommand{\epsilon}{\varepsilon}

\newcounter{marnote}

\begin{document}

\title[Regularity and stability for solutions]{Regularity and stability for solutions to elliptic equations and systems arising from high-contrast composites}

%

\author[Z. Zhao]{Zhiwen Zhao}

\address[Z. Zhao]{Beijing Computational Science Research Center, Beijing 100193, China.}
\email{zwzhao365@163.com}


\date{\today} 


\maketitle

\begin{abstract}
The main objective of this paper is to study the regularity and stability for solutions to the conductivity problems with degenerate coefficients in the presence of two rigid conductors, as one conductor keeps motionless and another conductor moves in some direction by a sufficiently small translational distance. We will show that the solutions are smooth and stable with respect to the small translational distance. Our results contain the following three cases: two perfect conductors, two insulators, a perfect conductor and an insulator. Further, we extend the results to the elasticity problem modeled by the Lam\'{e} system with partially infinite coefficients.

\end{abstract}

\maketitle



\section{Introduction and main results}

This investigation is stimulated by the problem of damage and fracture in high-contrast fiber-reinforced composites. The material fracture occurs in the thin gaps between fibers, since there always appear high concentrated fields including extreme electric and stress fields in these zones. These physical fields can be described by the gradients of solutions to elliptic equations and systems with discontinuous coefficients. Much effort has been devoted to quantitative analysis for the behavior of the gradients in the narrow regions between rigid inclusions since Babu\u{s}ka et al's famous numerical work \cite{BASL1999}. In the case of nondegenerate finite coefficients, it has been proved in \cite{DL2019,LV2000,LN2003,BV2000} that the gradients of solutions remain bounded independent of the distance $\varepsilon$ between inclusions. Especially in \cite{LN2003}, Li and Nirenberg established stronger $C^{1,\alpha}$ estimates for general second-order elliptic systems with piecewise H\"{o}lder continuous coefficients, which completely demonstrates the numerical observation in \cite{BASL1999}.

However, the situation becomes significantly different when the coefficients degenerate to be zero or infinity. Let $D\subseteq\mathbb{R}^{n}\,(n\geq3)$ be a bounded domain with $C^{2,\alpha}$ boundary, whose interior contains two $C^{2,\alpha}\,(0<\alpha<1)$ inclusions $D_{1}$ and $D_{2}$ with $\varepsilon$ apart, where $\varepsilon$ is a sufficiently small positive constant. Suppose further that $D_{i}$, $i=1,2$ are far away from the external boundary $\partial D$. When the conductivity $k$ degenerates to be $0$ or $\infty$, we obtain the following three types of boundary value problems (see \cite{BLY2009,BLY2010}):
\begin{align}\label{con006}
\begin{cases}
\Delta u=0,&\mathrm{in}\;D\setminus\overline{D_{1}\cup D_{2}},\\
u|_{+}=u|_{-},&\mathrm{on}\;\partial{D}_{i},\,i=1,2,\\
u=C_{i}^{0},&\mathrm{on}\;\overline{D}_{i},\,i=1,2,\\
\int_{\partial D_{i}}\frac{\partial u}{\partial\nu}\big|_{+}=0,&i=1,2,\\
u=\varphi,&\mathrm{on}\;\partial D,
\end{cases}
\end{align}
and
\begin{align}\label{PRO002}
\begin{cases}
\Delta u=0,&\mathrm{in}\;D\setminus(\partial D_{1}\cup\partial D_{2}),\\
u|_{+}=u|_{-},&\mathrm{on}\;\partial{D}_{i},\,i=1,2,\\
\frac{\partial u}{\partial\nu}\big|_{+}=0,&\mathrm{on}\;\partial{D}_{i},\,i=1,2,\\
u=\varphi,&\mathrm{on}\;\partial D,
\end{cases}
\end{align}
and
\begin{align}\label{PRO09}
\begin{cases}
\Delta u=0,&\mathrm{in}\;D\setminus(\partial D_{1}\cup\overline{D}_{2}),\\
u|_{+}=u|_{-},&\mathrm{on}\;\partial{D}_{i},\,i=1,2,\\
\frac{\partial u}{\partial\nu}\big|_{+}=0,&\mathrm{on}\;\partial{D}_{1},\\
u=C^{0},&\mathrm{on}\;\overline{D}_{2},\\
\int_{\partial D_{2}}\frac{\partial u}{\partial\nu}\big|_{+}=0,\\
u=\varphi,&\mathrm{on}\;\partial D,
\end{cases}
\end{align}
where $u\in H^{1}(D)$, $\varphi\in C^{2,\alpha}(\overline{D})$, $\varphi\not\equiv0$ on $\partial D$, $C_{i}^{0}$, $i=1,2,$ are the free constants determined by the fourth line of \eqref{con006}, the free constant $C^{0}$ is determined by the fourth line of \eqref{PRO09}, $\nu$ denotes the unit outer normal to the domain. Here and throughout this paper the subscript $\pm$ shows the limit from outside and inside the domain, respectively. It is worth pointing out that problems \eqref{con006} and \eqref{PRO002} are, respectively, called the perfect and insulated conductivity problems and their gradients of solutions always blow up with respect to the distance $\varepsilon$ between inclusions, as the distance $\varepsilon$ tends to zero. By the elliptic regularity theory, we know that $u\in C^{2,\alpha}(\overline{D\setminus D_{1}\cup D_{2}})$ for the perfect conductivity problem \eqref{con006}, $u\in C^{2,\alpha}(\overline{D\setminus D_{1}\cup D_{2}})\cap C^{2,\alpha}(\overline{D_{1}\cup D_{2}})$ for the insulated conductivity problem \eqref{PRO002}, and $u\in C^{2,\alpha}(\overline{D\setminus D_{1}\cup D_{2}})\cap C^{2,\alpha}(\overline{D}_{1})$ for the conductivity problem \eqref{PRO09} with a perfect conductor and an insulator.

With regard to the perfect conductivity case, there is a long list of literature involving different techniques and cases based on shapes of inclusions, dimensions, and applied boundary conditions. In the presence of two strictly convex inclusions, the gradient blow-up rates for the perfect conductivity problem have been proved to be $\varepsilon^{-1/2}$ \cite{AKLLL2007,BC1984,BLY2009,AKL2005,Y2007,Y2009,K1993} for $n=2$, $|\varepsilon\ln\varepsilon|^{-1}$ \cite{BLY2009,LY2009,BLY2010,L2012} for $n=3$ and $\varepsilon^{-1}$ \cite{BLY2009} for $n\geq4$, respectively. For more precise characterizations on the singular behavior of the electric field concentration, see \cite{ACKLY2013,LLY2019,BT2013,KLY2013,KLY2014} and the references therein. In the case when the current-electric field relation is nonlinear, we refer to \cite{G2012,CS2019,CS201902}. In addition, similar results have also been extended to the Lam\'{e} system with partially infinite coefficients, see \cite{BLL2015,BLL2017,KY2019} and the references therein. Different from the perfect conductivity case, it has been recently shown in \cite{DLY2022} that the gradient blow-up rate for the insulated conductivity problem also depends on the principal curvature of the surfaces of inclusions besides the dimension. For more related studies on the insulated conductivity problem, see \cite{AKL2005,AKLLL2007,BLY2010,Y2016,W2021,LY2021,DLY2021,LY202102}. As for problem \eqref{PRO09}, Dong and Yang \cite{DY2022} recently proved that there appears no blow-up for the gradient of the solution, which partially answers a question raised by Kang \cite{K2022}.

The above-mentioned work mainly focus on the establishment of gradient estimates and asymptotics in the thin gaps between inclusions and aim to reveal the dependence on the distance between inclusions. By contrast, the distance between two inclusions considered in this paper is of constant order but not a infinitely small quantity. In this case, we dedicate to studying the regularity and stability of solutions, as one inclusion moves in some direction for a sufficiently small distance and another inclusion keeps motionless. We will prove that the solutions are smooth and stable with respect to the small translational distance.

\subsection{Main results}
Although the results of this paper can be extended to any finite number of inclusions of general convex shapes, we restrict to two regular curvilinear squares and cubes for the convenience of presentation. For $x^{0}\in\mathbb{R}^{n}$, $r>0$ and $m,n\geq2$, let $\mathcal{B}_{r}^{(m)}(x^{0})$ represent the curvilinear squares and cubes centered at $x^{0}$ with the radius $r$ as follows:
\begin{align*}
\sum^{n}_{i=1}|x_{i}-x_{i}^{0}|^{m}=r^{m}.
\end{align*}
In particular, when $m=2$, it is a circle or ball, that is, $\mathcal{B}_{r}^{(2)}(x^{0})=B_{r}(x^{0})$. In the following, let
\begin{align}\label{DOM001}
D=\mathcal{B}_{10}^{(m)}(0),\;\, D_{1}^{h}=\mathcal{B}^{(m)}_{1}((3+h)e_{n}),\;\, D_{2}=\mathcal{B}^{(m)}_{1}(0),\;\,\Omega_{h}=D\setminus\overline{D_{1}^{h}\cup D_{2}},
\end{align}
where $|h|\ll1/2$ and $e_{n}=(0',1)$. Here and throughout this paper, we use superscript prime to denote $(n-1)$-dimensional variables. For $m\geq2$, define
\begin{align}\label{alpha}
\gamma=
\begin{cases}
\min\{\alpha,m-2\},&2<m<3,\\
\alpha,&m=2\;\mathrm{or}\;m\geq3.
\end{cases}
\end{align}

For any two subdomains $A,B\subseteq \mathbb{R}^{n}$, denote $A\Delta B:=(A\setminus \overline{B})\cup(B\setminus\overline{A})$. The regularity and stability results for these above three types of conductivity problems are stated as follows.
\begin{theorem}\label{thm001}
For $n\geq2$ and $|h|\ll1/2$, let $u_{h}\in H^{1}(D)\cap C^{2,\gamma}(\overline{\Omega}_{h})$ be, respectively, the solutions of problems \eqref{con006}, \eqref{PRO002} and \eqref{PRO09} with $D,D_{1}^{h},D_{2},\Omega_{h}$ defined by \eqref{DOM001} and $\gamma$ given by \eqref{alpha}. Then there exists a small constant $h_{0}>0$ such that $u_{h}$ are smooth and stable in $h$ for $h\in(-h_{0},h_{0})$. Moreover, $\|u_{h}-u_{0}\|_{L^{\infty}(\overline{D})}=O(|D_{1}^{h}\Delta D_{1}^{0}|)$.
\end{theorem}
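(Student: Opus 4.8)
The plan is to treat the solution $u_h$ as depending on the parameter $h$ through the geometry of the moving inclusion $D_1^h$, and to transfer that dependence to a fixed domain by a diffeomorphism. First I would construct, for $|h|$ small, a smooth family of diffeomorphisms $\Phi_h:\Omega_0\to\Omega_h$ that equals the identity outside a fixed neighborhood of $D_1^0$, maps $\partial D_1^0$ to $\partial D_1^h$, and is the identity near $\partial D_2$ and near $\partial D$; since $D_1^h=D_1^0+he_n$ as a set, one can take $\Phi_h(x)=x+h\,\chi(x)e_n$ with $\chi$ a fixed cutoff equal to $1$ near $D_1^0$ and supported away from $D_2$ and $\partial D$. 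The map $h\mapsto\Phi_h$ is real-analytic (indeed affine) in $h$ with values in $C^{2,\gamma}$, and $\Phi_h$ is a $C^{2,\gamma}$-diffeomorphism for $|h|<h_0$.

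Next I would pull back each of the three problems \eqref{con006}, \eqref{PRO002}, \eqref{PRO09} by $\Phi_h$. Writing $v_h=u_h\circ\Phi_h$, the function $v_h$ solves a transmission/boundary-value problem on the \emph{fixed} domain $\Omega_0$ with Laplace's operator replaced by a divergence-form operator $\mathrm{div}(A_h\nabla v_h)=0$, where $A_h(x)=|\det D\Phi_h|\,(D\Phi_h)^{-1}(D\Phi_h)^{-T}$ depends smoothly (analytically) on $h$ in $C^{1,\gamma}$, with $A_0=I$; the transmission conditions $v_h|_+=v_h|_-$, the no-flux conditions $\int_{\partial D_i}A_h\nabla v_h\cdot\nu=0$ or $A_h\nabla v_h\cdot\nu|_+=0$, the constancy constraints $v_h=C_i^0$ on $\overline D_i$, and the Dirichlet data $v_h=\varphi$ on $\partial D$ all transform into $h$-dependent conditions on the fixed geometry (the Dirichlet data and the geometry of $D_2$, $\partial D$ are unchanged because $\Phi_h$ is the identity there). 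I would then set up the problem as $F(h,v)=0$ where $F$ maps $(-h_0,h_0)\times X\to Y$ for suitable Banach spaces: $X$ the space of admissible $C^{2,\gamma}(\overline{\Omega_0})$ functions satisfying the linear constancy constraints, and $Y$ encoding the equation, flux, and boundary residuals; $F$ is real-analytic in $h$ and affine (hence analytic) in $v$, and the partial derivative $D_vF(0,u_0)$ is exactly the linearization, which is the original well-posed elliptic transmission problem and therefore an isomorphism $X\to Y$ by the elliptic regularity and Fredholm theory for these problems (uniqueness up to the already-quotiented constants, existence by the variational formulation). The analytic implicit function theorem then yields a unique real-analytic branch $h\mapsto v_h\in C^{2,\gamma}(\overline{\Omega_0})$, and composing back with $\Phi_h^{-1}$ gives smoothness (indeed analyticity) and stability of $u_h$. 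Once continuity at $h=0$ is in hand, an interior elliptic estimate away from the inclusions upgrades this to a smooth dependence of $u_h$ on $h$ in $C^\infty$ on compact subsets of the open region, and the a priori bound $\|u_h\|_{C^{2,\gamma}}\le C$ uniform in $h$ follows from the uniform ellipticity of $A_h$.

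For the final quantitative claim $\|u_h-u_0\|_{L^\infty(\overline D)}=O(|D_1^h\Delta D_1^0|)$, I would note that $|D_1^h\Delta D_1^0|$ is comparable to $|h|$ (the symmetric difference of a convex body and its translate by $he_n$ has volume of order $|h|$ times the $(n-1)$-area of the ``equatorial'' projection), so it suffices to prove $\|u_h-u_0\|_{L^\infty}\le C|h|$. This follows from the Lipschitz (in fact analytic) dependence of $v_h$ on $h$ established above, combined with the estimate $\|u_h\circ\Phi_h-u_h\|_{L^\infty}\le\|\nabla u_h\|_{L^\infty}\|\Phi_h-\mathrm{id}\|_{L^\infty}\le C|h|$ using the uniform gradient bound; writing $u_h-u_0=(u_h\circ\Phi_h\circ\Phi_h^{-1}-u_h\circ\Phi_h)+(v_h-v_0)\circ\Phi_h^{-1}+(v_0\circ\Phi_h^{-1}-u_0)$ and bounding each term by $C|h|$ finishes it. I would handle the three cases \eqref{con006}, \eqref{PRO002}, \eqref{PRO09} in parallel, the only differences being which inclusions carry the Dirichlet-type constancy constraint versus the Neumann-type no-flux condition, and correspondingly how the function spaces $X,Y$ are defined; the perfect-conductor constraint $u=C_i^0$ with the integral flux condition is a codimension-one affine constraint plus one scalar equation, which keeps the linearized map an isomorphism.

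The main obstacle I expect is verifying that the linearized operator $D_vF(0,u_0)$ is an isomorphism between the correctly chosen Banach spaces for each of the three transmission problems — in particular, treating the perfect-conductor problem \eqref{con006}, where the unknowns include the free constants $C_i^0$ and the side conditions are the vanishing of the net fluxes, so one must check that the pair (function, constants) is determined and the corresponding map onto (equation residual, boundary residual, flux residuals) is surjective with bounded inverse. This is essentially the classical solvability theory for these problems (available from the references cited in the introduction), but it has to be phrased as a clean Banach-space isomorphism statement compatible with the analytic implicit function theorem, and one must be careful that the $h$-dependent flux functionals and the constancy constraints vary analytically in $h$ as bounded operators. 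A secondary technical point is choosing $\gamma$ from \eqref{alpha} consistently so that the curvilinear-square geometry (which is only $C^{1,m-2}$ when $2<m<3$) is compatible with the $C^{2,\gamma}$ regularity claimed for $u_h$; this is why the cutoff $\chi$ and the diffeomorphism must be taken smooth while the boundary regularity of the domains is tracked separately.
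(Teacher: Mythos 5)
Your proposal follows the paper's strategy for the main part of the theorem essentially verbatim: a diffeomorphism of the form $x\mapsto x+h\chi(x)e_n$ (the paper uses $\psi(x)=(x',x_n-h\eta(x))$, i.e.\ the inverse direction) reducing to a fixed domain, Banach spaces $X,Y$ adapted to each of the three transmission problems, smoothness of $F(h,\cdot)$ in $h$, the isomorphism property of the linearization via classical solvability (the paper's Proposition 2.2 does exactly the Cramer's-rule bookkeeping for the free constants that you flag as the main obstacle), and the implicit function theorem. Where you genuinely diverge is the quantitative estimate $\|u_h-u_0\|_{L^\infty(\overline D)}=O(|h|)$: you extract it directly from the Lipschitz dependence of $v_h$ on $h$ furnished by the implicit function theorem together with a uniform gradient bound, whereas the paper reproves it from scratch by decomposing $u_h$ into auxiliary harmonic functions, comparing boundary values via the mean value theorem, and then invoking the maximum principle (perfect case) or a Moser iteration on $D\setminus\overline{D_1^0\cup D_1^h\cup D_2}$ (insulated case, where the relevant boundary data are Neumann and the maximum principle does not apply directly). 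Your route is shorter and avoids the iteration entirely, at the price of needing the chosen $X$-norm to control $L^\infty(\overline D)$ (the paper's $X$-norm, $\|\cdot\|_{H^1(D)}+\|\nabla\cdot\|_{L^\infty(\frac12 D)}$, does so only after an extra elliptic estimate near $\partial D$); the paper's route is self-contained and yields the constant in terms of the geometry. One small correction: your three-term telescoping of $u_h-u_0$ is miswritten (the first two terms double-count $u_h$); the clean version is the two-term identity $u_h-u_0=(v_h-v_0)\circ\Phi_h^{-1}+(u_0\circ\Phi_h^{-1}-u_0)$, after which your bounds apply verbatim.
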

\begin{remark}
Observe that $\partial D_{1}^{h}$, $\partial D_{2}$ and $\partial D$ are $C^{\infty}$ if $m\geq2$ is an even number. Then in the case when $\varphi\in C^{\infty}(\overline{D})$ and $m\geq2$ is an even number, by applying the proof of Theorem \ref{thm001} with a slight modification, we can obtain that for any $k\geq0$, $\|u_{h}-u_{0}\|_{C^{k}(\overline{D\setminus D_{1}^{0}\cup D_{1}^{h}\cup D_{2}})}=O(|h|)\rightarrow0$, as $h\rightarrow0$. In addition, when these two inclusions move in some directions simultaneously by a small distance, the similar results can be also obtained by slightly modifying the proofs. For example, if $D_{2}$ is replaced by $D_{2}^{h}:=\mathcal{B}^{(m)}_{1}(\pm h e_{n})$, we can obtain that $\|u_{h}-u_{0}\|_{L^{\infty}(\overline{D})}=O(|(D_{1}^{h}\Delta D_{1}^{0})\cup(D_{2}^{h}\Delta D_{2}^{0})|)$.

\end{remark}

The results can be also extended to the elasticity problem modeled by the Lam\'{e} system with partially infinite coefficients. Specifically, consider the following boundary value problem:
\begin{align}\label{La.002}
\begin{cases}
\partial_\alpha(A_{ij}^{\alpha\beta}\partial_\beta u_{h}^j)=0,\quad&\mathrm{in}\;D\setminus\overline{D_{1}^{h}\cup D_{2}},\\
u_{h}|_{+}=u_{h}|_{-},&\mathrm{on}\ \partial{D}_{1}^{h}\cup\partial D_{2},\\
u_{h}=\sum^{\frac{n(n+1)}{2}}_{\alpha=1}C_{1\alpha}^{h}\phi_{\alpha},&\mathrm{on}~\overline{D_{1}^{h}},\\
u_{h}=\sum^{\frac{n(n+1)}{2}}_{\alpha=1}C_{2\alpha}^{h}\phi_{\alpha},&\mathrm{on}~\overline{D}_{2},\\
\int_{\partial{D}_{1}^{h}}A_{ij}^{\alpha\beta}\partial_\beta u_{h}^j\nu_{\alpha}\phi_{k}^{i}|_{+}=0,&k=1,2,...,\frac{n(n+1)}{2},\\
\int_{\partial{D}_{2}}A_{ij}^{\alpha\beta}\partial_\beta u_{h}^j\nu_{\alpha}\phi_{k}^{i}|_{+}=0,&k=1,2,...,\frac{n(n+1)}{2},\\
u_{h}=\varphi,&\mathrm{on}\ \partial{D},
\end{cases}
\end{align}
where $u_{h}=(u_{h}^{1},...,u_{h}^{n})\in H^{1}(D;\mathbb{R}^{n})$, $\varphi\in C^{2,\alpha}(\overline{D};\mathbb{R}^{n})$, the free constants $C_{i\alpha}^{h}$, $i=1,2,\,\alpha=1,2,...,\frac{n(n+1)}{2}$ are determined by the fifth and sixth lines of \eqref{La.002}, $\nu=(\nu_{1},...,\nu_{n})$ represents the unit outer normal vector of $\partial D_{1}^{h}$ and $\partial D_{2}$, $\{\phi_{\alpha}\}_{\alpha=1}^{\frac{n(n+1)}{2}}$ is a basis of the linear space of rigid displacement $\Phi:=\{\phi\in C^1(\mathbb{R}^{n}; \mathbb{R}^{n})\,|\,\nabla\phi+(\nabla\phi)^T=0\}$, whose elements are given by $\{e_{i},\,x_{k}e_{j}-x_{j}e_{k}\,|\,1\leq i\leq n,\,1\leq j<k\leq n\}$, and
\begin{align*}
A_{ij}^{\alpha\beta}=\lambda\delta_{i\alpha}\delta_{j\beta}+\mu(\delta_{i\beta}\delta_{\alpha j}+\delta_{ij}\delta_{\alpha\beta}),\quad i,j,k,l=1,2,...,n.
\end{align*}
Here $\mu,n\lambda+2\mu\in(0,\infty)$, $\delta_{ij}$ is the kronecker symbol, that is, $\delta_{ij}=0$ if $i\neq j$, $\delta_{ij}=1$ if $i=j$. Similarly, we have

\begin{theorem}\label{thm005}
For $n\geq2$ and $|h|\ll1/2$, let $u_{h}\in H^{1}(D;\mathbb{R}^{n})\cap C^{2,\gamma}(\overline{\Omega}_{h};\mathbb{R}^{n})$ be the solution of the elasticity problem \eqref{La.002} with $D,D_{1}^{h},D_{2},\Omega_{h}$ defined by \eqref{DOM001} and $\gamma$ given by \eqref{alpha}. Then there exists a small constant $h_{0}>0$ such that $u_{h}$ are smooth and stable in $h$ for $h\in(-h_{0},h_{0})$. Furthermore, $\|u_{h}-u_{0}\|_{L^{\infty}(\overline{D})}=O(|D_{1}^{h}\Delta D_{1}^{0}|)$.
\end{theorem}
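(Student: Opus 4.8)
The plan is to treat Theorem \ref{thm005} as a vectorial analogue of Theorem \ref{thm001}, reusing the same architecture: first establish smoothness of $u_h$ in the spatial variable on each region for fixed $h$, then set up a perturbation (implicit-function / variational) argument to get smoothness and stability in $h$, and finally extract the quantitative $L^\infty$ bound from an energy estimate on the symmetric difference $D_1^h\Delta D_1^0$. The starting point is the standard well-posedness of \eqref{La.002}: for each fixed $h$ with $|h|\ll 1/2$ the solution $u_h\in H^1(D;\bR^n)$ exists, is unique, and minimizes the Lamé energy $\int_{\Omega_h}A^{\alpha\beta}_{ij}\partial_\beta u^j\partial_\alpha u^i$ over the affine subspace of $H^1(D;\bR^n)$ consisting of fields equal to $\varphi$ on $\partial D$ and equal to (arbitrary) rigid displacements on each of $\overline{D_1^h}$ and $\overline{D_2}$. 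Interior elliptic regularity for the Lamé system together with the $C^{2,\gamma}$ regularity of the interfaces $\partial D_1^h,\partial D_2,\partial D$ (here $\gamma$ as in \eqref{alpha} records the reduced regularity of $\mathcal{B}^{(m)}_1$ when $2<m<3$) gives $u_h\in C^{2,\gamma}(\overline{\Omega}_h;\bR^n)$, and when $\varphi$ and the interfaces are smooth, bootstrapping gives $C^\infty$ up to the interfaces from either side.

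Next I would pass to a fixed domain. Choose a family of diffeomorphisms $\Psi_h:\overline\Omega_0\to\overline\Omega_h$ depending smoothly (indeed real-analytically) on $h$, equal to the identity near $\partial D$ and near $\partial D_2$, and equal to the translation $x\mapsto x+he_n$ in a neighborhood of $\overline{D_1^0}$ — such a $\Psi_h$ is built by interpolating the two translations with a fixed cutoff, exactly as in the scalar proof. Pulling \eqref{La.002} back by $\Psi_h$ turns it into a Lamé-type system on the $h$-independent domain $\Omega_0$ with coefficients $\tilde A(h,x)$ that depend smoothly on $h$ and reduce to $A$ at $h=0$, with the same boundary/transmission structure and the same integral (zero-flux) side conditions, now with $h$-dependent weights. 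The transformed problem is the Euler–Lagrange system of an $h$-dependent quadratic functional $J_h(v)=\int_{\Omega_0}\tilde A^{\alpha\beta}_{ij}(h,\cdot)\partial_\beta v^j\partial_\alpha v^i$ on the fixed affine space $\mathcal V=\{v\in H^1(D;\bR^n): v=\varphi \text{ on }\partial D,\ v|_{\overline{D_1^0}}\in\Phi,\ v|_{\overline{D_2}}\in\Phi\}$. One then writes the stationarity condition as $F(h,v)=0$ with $F:(-h_0,h_0)\times\mathcal V\to\mathcal V^*$ (or, after subtracting a fixed extension of $\varphi$, on the linear subspace $\mathcal V_0$), observes $F$ is real-analytic in $(h,v)$ because $\tilde A$ is, and that $D_vF(0,u_0)$ is the coercive bilinear form associated with $A$ on $\mathcal V_0$ — invertibility here is precisely the coercivity of the Lamé energy modulo the rigid-motion constraints, which holds by the second Korn inequality on $\Omega_0$ adapted to the two rigid inclusions. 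The analytic implicit function theorem then yields a real-analytic map $h\mapsto v_h\in H^1$, hence $u_h=v_h\circ\Psi_h^{-1}$ is smooth (analytic) in $h$ in the relevant norms; combined with the spatial regularity above this gives smoothness and stability in $h$ in $C^{2,\gamma}(\overline\Omega_h)$-type norms after accounting for the moving domain.

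Finally, for the explicit rate $\|u_h-u_0\|_{L^\infty(\overline D)}=O(|D_1^h\Delta D_1^0|)$, I would argue directly rather than through the diffeomorphism. Extend both $u_h$ and $u_0$ by their (constant-on-inclusion) rigid displacements to all of $D$, and consider $w=u_h-u_0$. Testing the weak formulations of \eqref{La.002} for $u_h$ and $u_0$ against a common admissible test function and subtracting, the bulk integrals cancel except over the region where the two configurations disagree, i.e. over $D_1^h\Delta D_1^0$ (and, if one also let $D_2$ move, over $D_2^h\Delta D_2^0$, which is the content of the Remark); on that thin set $\nabla u_0$ (resp. $\nabla u_h$) is bounded by the already-established $C^1$ bounds uniform in small $h$, so the right-hand side is $O(|D_1^h\Delta D_1^0|)$. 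Coercivity (Korn again) bounds $\|\nabla w\|_{L^2(\Omega)}^2$ by this quantity up to handling the mismatch of the admissible classes — the rigid-displacement constants $C_{i\alpha}^h$ versus $C_{i\alpha}^0$ — which is controlled by the same perturbation estimate; then $\|w\|_{H^1}=O(|D_1^h\Delta D_1^0|^{1/2})$, and upgrading to the stated linear $L^\infty$ bound uses the smooth ($h$-differentiable) dependence from the previous step, since $\|u_h-u_0\|_{L^\infty}\le C|h|$ and $|h|\asymp|D_1^h\Delta D_1^0|$ for these fixed-shape inclusions. The main obstacle, and the place requiring genuine care rather than bookkeeping, is the invertibility/coercivity step: one must verify that the Lamé bilinear form restricted to $\mathcal V_0$ (fields vanishing on $\partial D$ and rigid on each inclusion) is coercive — this is a Korn-type inequality in a perforated domain with rigid obstacles — and, for Theorem \ref{thm005} specifically, that the zero-flux side conditions $\int_{\partial D_i^h}A^{\alpha\beta}_{ij}\partial_\beta u^j\nu_\alpha\phi_k^i=0$ are exactly the natural boundary conditions of $J_h$ on $\mathcal V$, so that the variational and PDE formulations genuinely coincide and the implicit-function machinery applies verbatim.
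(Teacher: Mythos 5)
Your overall architecture matches the paper's: pull the problem back to the fixed domain $\Omega_0$ by a cutoff-interpolated translation, verify that the resulting nonlinear map $F(h,\tilde v)=\partial_\alpha(\tilde A^{\alpha\beta}_{ij}\partial_\beta\tilde v^j)$ is smooth in $(h,\tilde v)$ and that its linearization at $(0,u_0)$ is an isomorphism, and invoke the implicit function theorem. Where you diverge is in the two technical pillars. For the isomorphism, you propose coercivity of the Lam\'e form on the constrained space via the second Korn inequality (Lax--Milgram), plus the observation that the zero-flux conditions are the natural boundary conditions of $J_h$; the paper instead constructs the preimage explicitly, solving auxiliary Dirichlet problems $\tilde v_0,\tilde v_{i\alpha}$ with rigid-displacement boundary data and determining the free constants $\widetilde C_{i\alpha}$ by Cramer's rule, then checking the flux conditions. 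Your route is cleaner and more standard, but note that surjectivity must land in $X\subset C^{2,\gamma}(\overline\Omega_0)$, not merely in $H^1$: Lax--Milgram gives the weak solution, and you still need the Schauder step (which the paper builds in by quoting classical elliptic theory for each auxiliary problem). Either way this pillar is sound.

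The genuine gap is in your final step. Your energy/testing argument over $D_1^h\Delta D_1^0$ only yields $\|u_h-u_0\|_{H^1}=O(|h|^{1/2})$, and the ``upgrade'' is justified by the phrase ``since $\|u_h-u_0\|_{L^\infty}\le C|h|$'' --- which is precisely the statement to be proved, so as written the argument is circular. If you instead mean that the $C^1$-in-$h$ dependence from the implicit function theorem supplies the $O(|h|)$ bound, you must check that the target norm of the IFT (here $\|\cdot\|_{H^1(D)}+\|\nabla\cdot\|_{L^\infty(\frac12 D)}$) actually controls $\|\cdot\|_{L^\infty(\overline D)}$, and then transfer the estimate from $\tilde u_h-\tilde u_0$ on the fixed domain back to $u_h-u_0$ on the moving one, which costs an additional term $\tilde u_0\circ\psi-\tilde u_0$ controlled only through a uniform Lipschitz bound on $u_0$ across the transition region. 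The paper avoids all of this by proving the rate directly and independently of the IFT: decompose $v(h,\cdot)=\sum_{i,\alpha}C^h_{i\alpha}v_{i\alpha}(h,\cdot)+v_0(h,\cdot)$, compare $v_{i\alpha}(h,\cdot)$ and $v_{i\alpha}(0,\cdot)$ pointwise on $\partial(D\setminus\overline{D_1^0\cup D_1^h\cup D_2})$ via the mean value theorem and boundary elliptic estimates, propagate by the maximum modulus principle, deduce $a^h_{ij\alpha\beta}=a^0_{ij\alpha\beta}+O(|h|)$ and hence $C^h_{i\alpha}=C^0_{i\alpha}+O(|h|)$ by Cramer's rule, and assemble. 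You should either adopt that direct argument or fully justify the embedding and transfer steps in your IFT-based upgrade; as it stands the $L^\infty$ rate is not established.
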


In the following, we will establish the regularity and stability of solutions to the conductivity problems with degenerate coefficients and the elasticity problem with respect to small translational distance in Sections \ref{SEC02} and \ref{SEC03}, respectively.

\section{Regularity and stability for the conductivity problem with degenerate coefficients}\label{SEC02}
For $m\geq2$, define
\begin{align*}
\widetilde{m}=
\begin{cases}
[m]+1,&\text{if $[m]$ is odd},\\
m,&\text{if $m$ is even},\\
[m]+2,&\text{if $m\neq[m]$ and $[m]$ is even},
\end{cases}
\end{align*}
where $[m]$ denotes the integer part of $m$. For $h,s,t\in\mathbb{R}$ and $0<|h|\ll1/2$, define
\begin{align*}
g(s)=&
\begin{cases}
e^{\frac{1}{(s-(1+|h|)^{\widetilde{m}})(s-(3/2)^{\widetilde{m}})}},&\mathrm{if}\;(1+|h|)^{\widetilde{m}}<s<(3/2)^{\widetilde{m}},\\
0,&\mathrm{if}\;s\leq(1+|h|)^{\widetilde{m}}\;\mathrm{or}\;s\geq(3/2)^{\widetilde{m}},
\end{cases}\\
G(t)=&\frac{\int^{\infty}_{t}g(s)ds}{\int^{\infty}_{-\infty}g(s)ds},\quad\eta(x)=G\left(\sum^{n-1}_{i=1}|x_{i}|^{\widetilde{m}}+|x_{n}-3|^{\widetilde{m}}\right).
\end{align*}
Then $\eta\in C^{\infty}(\mathbb{R}^{n})$ is a cutoff function satisfying that $\eta=1$ in $\mathcal{B}_{1+|h|}^{(\widetilde{m})}(3e_{n})$, $\eta=0$ in $\mathbb{R}^{n}\setminus\mathcal{B}_{3/2}^{(\widetilde{m})}(3e_{n})$ and $|\nabla\eta|\leq C(m,n).$ Let $y=\psi(x)=(x',x_{n}-h\eta(x))$. Then $\psi$ is a diffeomorphism from $\mathbb{R}^{n}$ to $\mathbb{R}^{n}$, as $|h|$ is sufficiently small. In fact, since $\det(\partial_{x}y)=1-h\partial_{x_{n}}\eta$, it suffices to pick $\delta_{0}:=\delta_{0}(m,n)=2^{-1}(\sup\limits_{x\in\mathbb{R}^{n}}|\partial_{x_{n}}\eta|)^{-1}$ and let $h\in(-\delta_{0},\delta_{0})$. Remark that this diffeomorphism carry out translation and split joint in the interior of $D$.

Rewrite the original problems \eqref{con006}, \eqref{PRO002} and \eqref{PRO09} as follows:
\begin{align}\label{con002}
\begin{cases}
\mathrm{div}(a_{h}^{k}(x)\nabla u_{h}^{k})=0,&\mathrm{in}\; D,\\
u_{h}^{k}=\varphi, &\mathrm{on}\;\partial D,
\end{cases}
\end{align}
where the coefficients $a_{k}(x)$, $k=1,2,3$ are, respectively, given by
\begin{align}\label{QKW001}
a_{h}^{1}(x)=&
\begin{cases}
\infty,&x\in D_{1}^{h}\cup D_{2},\\
1,&x\in \Omega_{h},
\end{cases}\quad a_{h}^{2}(x)=
\begin{cases}
0,&x\in D_{1}^{h}\cup D_{2},\\
1,&x\in \Omega_{h},
\end{cases}
\end{align}
and
\begin{align}\label{QKW00100}
a_{h}^{3}(x)=&
\begin{cases}
0,&x\in D_{1}^{h},\\
\infty,&x\in D_{2},\\
1,&x\in\Omega_{h}.
\end{cases}
\end{align}
Denote $\tilde{u}_{h}^{k}(y)=u_{h}^{k}(x)$, $k=1,2,3$. By the above change of variables, problem \eqref{con002} turns into
\begin{align}\label{ZWQ001Z66}
\begin{cases}
\mathrm{div}(\tilde{A}_{h}^{k}(y)\nabla \tilde{u}_{h}^{k})=0,&\mathrm{in}\; D,\\
\tilde{u}_{h}^{k}=\varphi, &\mathrm{on}\;\partial D,
\end{cases}
\end{align}
where $\tilde{A}_{h}^{1}(y)=\infty I$ in $\overline{D_{1}^{0}\cup D_{2}}$, $\tilde{A}_{h}^{2}(y)=0I$ in $\overline{D_{1}^{0}\cup D_{2}}$, $\tilde{A}_{h}^{3}(y)=0I$ in $\overline{D_{1}^{0}}$, $\tilde{A}_{h}^{3}=\infty I$ in $\overline{D}_{2}$, $I$ is the identity matrix, and for $y\in \Omega_{0}$, $k=1,2,3,$
\begin{gather}
\begin{align*}
\tilde{A}_{h}^{k}(y)=&(\tilde{a}_{ij})=\frac{(\partial_{x}y)(\partial_{x}y)^{t}}{\det(\partial_{x}y)}
=\frac{1}{b_{n}}
\begin{pmatrix}1&0&\cdots&0&b_{1} \\ 0&1&\cdots&0&b_{2}\\ \vdots&\vdots&\ddots&\vdots&\vdots\\0&0&\cdots&1&b_{n-1}\\ b_{1}&b_{2}&\cdots&b_{n-1}&\sum^{n-1}_{i=1}b_{i}^{2}+b_{n}^{2}
\end{pmatrix},
\end{align*}
\end{gather}
and
\begin{align*}
b_{i}=-h\partial_{x_{i}}\eta(\psi^{-1}(y)),\;i=1,...,n-1,\quad b_{n}=1-h\partial_{x_{n}}\eta(\psi^{-1}(y)).
\end{align*}

Introduce the following function spaces:
\begin{align*}
X^{1}=\Big\{&\tilde{v}\in H^{1}(D)\cap C^{2,\gamma}(\overline{\Omega}_{0})\,\big|\,\text{$\nabla \tilde{v}=0$ in $\overline{D_{1}^{0}\cup D_{2}}$, $\tilde{v}|_{+}=\tilde{v}|_{-}$ on $\partial D_{1}^{0}\cup\partial D_{2}$,}\notag\\
&\int_{\partial D_{1}^{0}}\frac{\partial\tilde{v}}{\partial\nu}\Big|_{+}=\int_{\partial D_{2}}\frac{\partial \tilde{v}}{\partial\nu}\Big|_{+}=0,\;\text{$\tilde{v}=\varphi$ on $\partial D$}\Big\},\\
Y^{1}=\{&f\in H^{-1}(D)\cap C^{0,\gamma}(\overline{\Omega}_{0})\,|\,\text{$f=0$ in $\overline{D_{1}^{0}\cup D_{2}}$}\},
\end{align*}
and
\begin{align*}
X^{2}=\Big\{&\tilde{v}\in H^{1}(D)\cap C^{2,\gamma}(\overline{\Omega}_{0})\cap C^{2,\gamma}(\overline{D_{1}^{0}\cup D_{2}})\,|\,
\tilde{v}|_{+}=\tilde{v}|_{-}\;\text{on}\; \partial D_{1}^{0}\cup\partial D_{2},\notag\\
&\frac{\partial u}{\partial\nu}\Big|_{+}=0\;\text{on}\; \partial D_{1}^{0}\cup\partial D_{2},\;\text{$\tilde{v}=\varphi$ on $\partial D$}\Big\},\\
Y^{2}=\{&f\in H^{-1}(D)\cap C^{0,\gamma}(\overline{\Omega}_{0})\cap C^{0,\gamma}(\overline{D_{1}^{0}\cup D_{2}})\},
\end{align*}
and
\begin{align*}
X^{3}=\Big\{&\tilde{v}\in H^{1}(D)\cap C^{2,\gamma}(\overline{\Omega}_{0})\cap C^{2,\gamma}(\overline{D_{1}^{0}})\,|\,\text{$\nabla \tilde{v}=0$ in $\overline{D}_{2}$, $\tilde{v}|_{+}=\tilde{v}|_{-}$ on $\partial D_{1}^{0}\cup\partial D_{2}$,}\notag\\
&\text{$\frac{\partial u}{\partial\nu}\Big|_{+}=0$ on $\partial D_{1}^{0}$, $\int_{\partial D_{2}}\frac{\partial u}{\partial\nu}\Big|_{+}=0$,  $\tilde{v}=\varphi$ on $\partial D$}\Big\},\\
Y^{3}=\{&f\in H^{-1}(D)\cap C^{0,\gamma}(\overline{\Omega}_{0})\cap C^{0,\gamma}(\overline{D_{1}^{0}})\,|\,\text{$f=0$ in $\overline{D}_{2}$}\},
\end{align*}
with the corresponding norms as $\|\tilde{v}\|_{X^{k}}:=\|\tilde{v}\|_{H^{1}(D)}+\|\nabla\tilde{v}\|_{L^{\infty}(\frac{1}{2}D)}$ and $\|f\|_{Y^{k}}:=\|f\|_{H^{-1}(D)}=\sup\{\langle f,w\rangle\,|\,w\in H^{1}_{0}(D),\,\|w\|_{H^{1}_{0}(D)}\leq1\}$, $k=1,2,3,$ where $\gamma$ is defined by \eqref{alpha}, $\langle\, ,\rangle$ denotes the pairing between $H^{-1}(D)$ and $H_{0}^{1}(D)$. It is easy to verify that $X^{k}$ and $Y^{k}$ are Banach spaces. Denote $K:=(-\delta_{0},\delta_{0})$ and $F^{k}(h,\tilde{v}):=\mathrm{div}(\tilde{A}_{h}^{k}(y)\nabla \tilde{v})$, $k=1,2,3$.

\begin{prop}\label{QAT001}
For each $k=1,2,3,$ the map $F^{k}$ is in $C^{\infty}(K\times X^{k},Y^{k})$ in the sense that $F^{k}$ possesses continuous Fr\'{e}chet derivatives of any order.

\end{prop}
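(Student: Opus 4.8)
The plan is to verify the claimed smoothness of $F^k$ by analysing the structure of the map $\tilde v\mapsto \operatorname{div}(\tilde A_h^k(y)\nabla\tilde v)$ and showing that the only $h$-dependence sits in the matrix $\tilde A_h^k$ on the fixed annular region $\Omega_0$, where it depends smoothly — indeed polynomially, after dividing by $b_n$, which never vanishes for $h\in K$ — on the entries $b_1,\dots,b_n$, and these in turn depend on $h$ through the fixed smooth function $x\mapsto \partial_{x_i}\eta(\psi_h^{-1}(y))$. First I would observe that on the inclusions the matrices $\tilde A_h^k$ are $h$-independent constants ($\infty I$, $0I$, or $I$ in the relevant pieces), and that for $\tilde v\in X^k$ we have $\nabla\tilde v=0$ on the perfect-conductor components while the insulating conditions are built into the space; hence $F^k(h,\tilde v)$ is supported in $\overline{\Omega_0}$ and equals $0$ on the inclusions, landing in $Y^k$ by construction. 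The bulk of the argument is therefore the regularity of $(h,\tilde v)\mapsto \operatorname{div}(\tilde A_h^k(y)\nabla\tilde v)$ as a map into $C^{0,\gamma}(\overline{\Omega_0})\cap H^{-1}(D)$.

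Next I would make the bilinear/multilinear structure explicit. Writing $\tilde A_h^k=(\tilde a_{ij}(h,y))$ with $\tilde a_{ij}$ of the form $p_{ij}(b)/b_n$ for polynomials $p_{ij}$, we have
\begin{align*}
\operatorname{div}(\tilde A_h^k\nabla\tilde v)=\sum_{i,j}\tilde a_{ij}(h,y)\,\partial_i\partial_j\tilde v+\sum_{i,j}\big(\partial_i\tilde a_{ij}(h,y)\big)\partial_j\tilde v .
\end{align*}
The map $\tilde v\mapsto(\partial_i\partial_j\tilde v,\partial_j\tilde v)$ is a bounded \emph{linear} map from $X^k$ into $\big(C^{0,\gamma}(\overline{\Omega_0})\big)^{\#}$, so it is $C^\infty$ in $\tilde v$ with vanishing higher derivatives. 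The map $h\mapsto \tilde a_{ij}(h,\cdot)$ and $h\mapsto\partial_i\tilde a_{ij}(h,\cdot)$ take values in $C^{0,\gamma}(\overline{\Omega_0})$ (using that $\eta\in C^\infty$, $\psi_h^{-1}$ is smooth in $(h,y)$ jointly, and $1/b_n$ is smooth since $b_n$ is bounded away from zero on $K\times\overline{\Omega_0}$), and I claim they are $C^\infty$ in $h$; this reduces, via the quotient rule and chain rule, to the smoothness of $h\mapsto\psi_h^{-1}(\cdot)$ in $C^{1,\gamma}(\overline{\Omega_0})$, which follows from the implicit function theorem applied to $x=y+(0,\dots,0,h\eta(x))$ (or directly from the explicit bound $\det(\partial_x y)=1-h\partial_{x_n}\eta$). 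Since $F^k$ is a sum of products (coefficient)$\times$(linear functional of $\tilde v$), with one factor $C^\infty$ in $h$ only and the other $C^\infty$ (linear) in $\tilde v$ only, and since pointwise multiplication $C^{0,\gamma}\times C^{0,\gamma}\to C^{0,\gamma}$ is bounded bilinear, $F^k$ is a finite sum of bounded bilinear maps composed with $C^\infty$ maps, hence $C^\infty$ on $K\times X^k$; I would also check the $H^{-1}(D)$ component, which is even easier since it is controlled by the $H^1(D)$ norm of $\tilde v$ and the $L^\infty$ bound on the coefficients, depending polynomially on $h$.

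The main obstacle I anticipate is bookkeeping around the three different function spaces and making sure $F^k$ genuinely maps into $Y^k$ — in particular that the transmission/flux conditions encoded in $X^k$ are preserved in the right sense and that $F^k(h,\tilde v)$ vanishes on the correct inclusions (on $\overline{D_1^0\cup D_2}$ for $k=1$, on $\overline{D}_2$ for $k=3$), together with confirming that no $h$-dependence leaks into the definition of the target space (the domains $D_1^0,D_2$ and the spaces $X^k,Y^k$ are all fixed, by design of the change of variables $\psi_h$). A secondary technical point is the joint smoothness of $\psi_h^{-1}$ and the uniform lower bound on $b_n$ over the relevant compact parameter range, but this is handled once and for all by shrinking $\delta_0$; everything downstream is then elementary multilinear algebra and the algebra property of Hölder spaces.
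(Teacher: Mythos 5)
Your overall strategy---isolate the $h$-dependence in the coefficients $\tilde a_{ij}$ on the fixed region $\Omega_{0}$, observe that $F^{k}(h,\cdot)$ is linear in $\tilde v$, and reduce everything to uniform smooth bounds on $h\mapsto\tilde a_{ij}(h,\cdot)$---is the same as the paper's. But the step that is supposed to carry the Fr\'echet-differentiability claim does not work with the norms the paper actually puts on $X^{k}$ and $Y^{k}$. The norm on $X^{k}$ is only $\|\tilde v\|_{H^{1}(D)}+\|\nabla\tilde v\|_{L^{\infty}(\frac{1}{2}D)}$, so your assertion that $\tilde v\mapsto(\partial_{i}\partial_{j}\tilde v,\partial_{j}\tilde v)$ is a \emph{bounded} linear map from $X^{k}$ into $C^{0,\gamma}(\overline{\Omega}_{0})$ is false: membership in $C^{2,\gamma}(\overline{\Omega}_{0})$ is part of the definition of the \emph{set} $X^{k}$, but the $X^{k}$-norm does not control $\|\tilde v\|_{C^{2,\gamma}(\overline{\Omega}_{0})}$, so your non-divergence expansion plus H\"older-algebra argument does not yield operator bounds in the relevant topology. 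Likewise, the norm on $Y^{k}$ is only the $H^{-1}(D)$ norm; the $C^{0,\gamma}$ structure is needed only for set membership in $Y^{k}$, and what must actually be proved is the uniform bound $\|\partial_{h}^{s}F^{k}(h,\tilde v)\|_{H^{-1}(D)}\leq C\|\tilde v\|_{X^{k}}$ for $h\in K$.

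That $H^{-1}$ estimate, which you set aside as ``even easier,'' is where the content of the proof lies, and it is not simply controlled by $\|\tilde v\|_{H^{1}(D)}$ together with an $L^{\infty}$ bound on the coefficients. For $k=1$ (and for the $h$-derivatives in all cases), $F^{k}(h,\tilde v)$ is the piecewise-defined $L^{2}$ function equal to $\partial_{i}(\tilde a_{ij}\partial_{j}\tilde v)$ in $\Omega_{0}$ and to $0$ (or $\Delta\tilde v$) in the inclusions---not the global distributional divergence over $D$. Pairing it with $w\in H^{1}_{0}(D)$ and integrating by parts over $\Omega_{0}$ therefore produces interface terms $\int_{\partial D_{1}^{0}\cup\partial D_{2}}\partial_{h}^{s}\tilde a_{ij}\,\partial_{j}\tilde v\,\nu_{i}\,w$, which the paper controls by $\|\nabla\tilde v\|_{L^{\infty}(\frac{1}{2}D)}\|w\|_{L^{2}(\partial D_{1}^{0}\cup\partial D_{2})}$ and the trace theorem; this is exactly why the $L^{\infty}$ gradient term is built into the $X^{k}$-norm. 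Your outline never confronts these boundary terms. The repair is to drop the non-divergence expansion and the $C^{0,\gamma}$ target, and instead estimate $\langle\partial_{h}^{s}F^{k}(h,\tilde v),w\rangle$ directly by integration by parts as above, using $|\partial_{h}^{s}\tilde a_{ij}|\leq C(s,m,n)$ on $\overline{K\times\Omega_{0}}$ (and, for $s\geq1$, the fact that $\partial_{h}^{s}\tilde a_{ij}=0$ on $\partial D_{1}^{0}\cup\partial D_{2}$, which makes the boundary term vanish); your observations about the smooth, uniformly bounded $h$-dependence of $\tilde a_{ij}$ then finish the argument exactly as in the paper.
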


\begin{proof}
Observe first that for any $s\geq0$,
\begin{align}\label{MZ001}
\partial_{h}^{s}F^{1}(h,\tilde{v})=
\begin{cases}
\partial_{i}(\partial_{h}^{s}\tilde{a}_{ij}\partial_{j}\tilde{v}),&\mathrm{in}\;\Omega_{0},\\
0,&\mathrm{in}\;D\setminus\Omega_{0},
\end{cases}
\end{align}
and
\begin{align}\label{MZ006}
F^{2}(h,\tilde{v})=
\begin{cases}
\partial_{i}(\tilde{a}_{ij}\partial_{j}\tilde{v}),&\mathrm{in}\;\Omega_{0},\\
\Delta \tilde{v},&\mathrm{in}\;D\setminus\Omega_{0},
\end{cases}\;F^{3}(h,\tilde{v})=
\begin{cases}
\partial_{i}(\tilde{a}_{ij}\partial_{j}\tilde{v}),&\mathrm{in}\;\Omega_{0},\\
\Delta \tilde{v},&\mathrm{in}\;\overline{D_{1}^{0}},\\
0,&\mathrm{in}\;\overline{D}_{2},
\end{cases}
\end{align}
and, for $s\geq1$, $k=2,3,$
\begin{align}\label{MZ009}
\partial_{h}^{s}F^{k}(h,\tilde{v})=
\begin{cases}
\partial_{i}(\partial_{h}^{s}\tilde{a}_{ij}\partial_{j}\tilde{v}),&\mathrm{in}\;\Omega_{0},\\
0,&\mathrm{in}\;D\setminus\Omega_{0}.
\end{cases}
\end{align}
Note that for each $h\in K$, $\partial_{h}^{s}F^{k}(h,\tilde{v})$ is linear with respect to $\tilde{v}$, where $s\geq0,\,k=1,2,3$. According to the definition of $\tilde{a}_{ij}$, it follows from a direct computation that $|\partial_{h}^{s}\tilde{a}_{ij}|\leq C(s,m,n)$ in $\overline{K\times\Omega_{0}}$. This, together with the fact that $\tilde{v}\in X^{k}$, shows that $\partial_{h}^{s}F^{k}(h,\tilde{v})\in L^{2}(D)$. Hence, for any $w\in H_{0}^{1}(D)$, $\|w\|_{H^{1}_{0}(D)}\leq1$,

$(i)$ if $k=1$ and $s\geq0$, we have from \eqref{MZ001}, integration by parts, H\"{o}lder's inequality and Trace Theorem that
\begin{align*}
|\langle \partial_{h}^{s}F^{1},w\rangle|=&\left|\int_{D}\partial_{h}^{s}F^{1}(h,\tilde{v})w\,dy\right|=\left|\int_{\Omega_{0}}\partial_{h}^{s}F^{1}(h,\tilde{v})w\,dy\right|\notag\\
=&\left|\int_{\partial D_{1}^{0}\cup\partial D_{2}}\partial_{h}^{s}\tilde{a}_{ij}\partial_{j}\tilde{v}\nu_{i}w-\int_{\Omega_{0}}\partial_{h}^{s}\tilde{a}_{ij}\partial_{j}\tilde{v}\partial_{i}wdy\right|\notag\\
\leq&C(\|\nabla\tilde{v}\|_{L^{\infty}(\frac{1}{2}D)}\|w\|_{L^{2}(\partial D_{1}^{0}\cup\partial D_{2})}+\|\nabla \tilde{v}\|_{L^{2}(\Omega_{0})}\|\nabla w\|_{L^{2}(D)})\notag\\
\leq&C(\|\nabla\tilde{v}\|_{L^{\infty}(\frac{1}{2}D)}+\|\nabla\tilde{v}\|_{L^{2}(\Omega_{0})})\|w\|_{H^{1}_{0}(D)}\leq C\|\tilde{v}\|_{X^{1}};
\end{align*}

$(ii)$ for $k=2,3$, if $s=0$, it follows from \eqref{MZ006}--\eqref{MZ009}, integration by parts, Trace Theorem and H\"{o}lder's inequality again that
\begin{align*}
|\langle F^{2},w\rangle|=&\left|\int_{D}F^{2}(h,\tilde{v})w\,dy\right|=\left|\int_{\Omega_{0}}\partial_{i}(\tilde{a}_{ij}\partial_{j}\tilde{v})w\,dy+\int_{D_{1}^{0}\cup D_{2}}\Delta\tilde{v}w\right|\notag\\
=&\left|\int_{\partial D_{1}^{0}\cup\partial D_{2}}\left(\frac{\partial\tilde{v}}{\partial\nu}\Big|_{+}+\frac{\partial\tilde{v}}{\partial\nu}\Big|_{-}\right)w-\int_{\Omega_{0}}\tilde{a}_{ij}\partial_{j}\tilde{v}\partial_{i}w-\int_{D_{1}^{0}\cup D_{2}}\nabla\tilde{v}\nabla w\right|\notag\\
\leq& C(\|\nabla\tilde{v}\|_{L^{\infty}(\frac{1}{2}D)}+\|\nabla\tilde{v}\|_{L^{2}(D)})\|w\|_{H^{1}_{0}(D)}\leq C\|\tilde{v}\|_{X^{2}},
\end{align*}
and
\begin{align*}
|\langle F^{3},w\rangle|=&\left|\int_{D}F^{3}(h,\tilde{v})w\,dy\right|=\left|\int_{\Omega_{0}}\partial_{i}(\tilde{a}_{ij}\partial_{j}\tilde{v})w\,dy+\int_{D_{1}^{0}}\Delta\tilde{v}w\right|\notag\\
=&\left|\int_{\partial D_{1}^{0}\cup\partial D_{2}}\frac{\partial\tilde{v}}{\partial\nu}\Big|_{+}\cdot w+\int_{\partial D_{1}^{0}}\frac{\partial\tilde{v}}{\partial\nu}\Big|_{-}\cdot w-\int_{\Omega_{0}}\tilde{a}_{ij}\partial_{j}\tilde{v}\partial_{i}w-\int_{D_{1}^{0}}\nabla\tilde{v}\nabla w\right|\notag\\
\leq& C(\|\nabla\tilde{v}\|_{L^{\infty}(\frac{1}{2}D)}+\|\nabla\tilde{v}\|_{L^{2}(\Omega_{0}\cup D_{1}^{0})})\|w\|_{H^{1}_{0}(D)}\leq C\|\tilde{v}\|_{X^{3}},
\end{align*}
and, if $s\geq1$, using the fact that $\partial_{h}^{s}\tilde{a}_{ij}=0$ on $\partial D_{1}^{0}\cup\partial D_{2}$,
\begin{align*}
|\langle\partial_{h}^{s}F^{k},w\rangle|=&\left|\int_{D}\partial_{h}^{s}F^{k}(h,\tilde{v})w\,dy\right|=\left|\int_{\Omega_{0}}\partial_{i}(\partial_{h}^{s}\tilde{a}_{ij}\partial_{j}\tilde{v})w\,dy\right|=\left|\int_{\Omega_{0}}\partial_{h}^{s}\tilde{a}_{ij}\partial_{j}\tilde{v}\partial_{i}w\right|\notag\\
\leq& C\|\nabla\tilde{v}\|_{L^{2}(\Omega_{0})}\|\nabla w\|_{L^{2}(\Omega_{0})}\leq C\|\tilde{v}\|_{X^{k}},
\end{align*}
Therefore, we deduce that for $k=1,2,3,$ $\|\partial_{h}^{s}F^{k}(h,\tilde{v})\|_{Y^{k}}\leq C\|\tilde{v}\|_{X^{k}}$ for all $\tilde{v}\in X^{k}$. Then for any $h\in K$, $\partial_{h}^{s}F^{k}(h,\cdot):X^{k}\rightarrow Y^{k}$ is a bounded linear operator with uniformly bounded norm on $K$. Hence we have from standard theories in functional analysis that for $k=1,2,3,$ $F^{k}$ is a $C^{\infty}$ map from $K\times X^{k}$ to $Y^{k}$.

\end{proof}

Note that for $k=1,2,3,$ $F^{k}(h,\tilde{u}+\tilde{v})-F^{k}(h,\tilde{u})=F^{k}(h,\tilde{v}):=\mathcal{L}^{h,k}_{\tilde{u}}\tilde{v}$, where $h\in K$ and $\tilde{u},\tilde{v}\in X^{k}$. Then we obtain that the linear bounded operator $\mathcal{L}^{h,k}_{\tilde{u}}:X^{k}\rightarrow Y^{k}$ is the Fr\'{e}chet derivative of $F^{k}$ with respect to $\tilde{u}$ at $(h,\tilde{u}).$ When $h=0$, $y=\psi(x)=x$. Then we have $\mathcal{L}^{0,k}_{u_{0}^{k}}\tilde{v}=\mathrm{div}(A_{0}^{k}\nabla\tilde{v})$, where $A_{0}^{k}=a_{0}^{k}(x)I$ with $a_{0}^{k}(x)$ defined by \eqref{QKW001}--\eqref{QKW00100} with $h=0$. In particular, $\mathcal{L}^{0,k}_{u_{0}^{k}}u_{0}^{k}=\mathrm{div}(A_{0}^{k}\nabla u_{0}^{k})=0.$

\begin{prop}\label{QAT002}
For every $k=1,2,3,$ the operator $\mathcal{L}^{0,k}_{u_{0}^{k}}:X^{k}\rightarrow Y^{k}$ is an isomorphism.
\end{prop}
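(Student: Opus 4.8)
The plan is to show that $\mathcal{L}^{0,k}_{u_{0}^{k}} = \mathrm{div}(A_{0}^{k}\nabla\cdot)$ is both injective and surjective as a map $X^{k}\to Y^{k}$, and then invoke the bounded inverse theorem (both spaces being Banach, by the remark preceding the proposition) to conclude it is an isomorphism. The three cases $k=1,2,3$ correspond to the perfect conductivity problem, the insulated problem, and the mixed problem, and each must be handled with the appropriate weak formulation, but the structure is the same.

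\textbf{Injectivity.} Suppose $\tilde v\in X^{k}$ with $\mathrm{div}(A_{0}^{k}\nabla\tilde v)=0$ in $Y^{k}$. Since $\tilde v=\varphi$ on $\partial D$ is already built into $X^{k}$, the difference of two solutions lies in the homogeneous version of the space, so it suffices to observe that $\tilde v$ itself satisfies a homogeneous boundary value problem once we note $F^{k}$ is affine: more precisely, $\mathcal{L}^{0,k}_{u_0^k}\tilde v = 0$ together with the linear structure means $\tilde v$ and $\tilde v' $ both solving give $w:=\tilde v-\tilde v'\in H^1_0$-type space with $\mathrm{div}(A_0^k\nabla w)=0$. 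For $k=1$, test the weak formulation against $w$ itself: using that $\nabla w=0$ on the inclusions, $w|_+=w|_-$ on the interfaces, the flux conditions $\int_{\partial D_i}\partial_\nu w|_+ =0$, and $w=0$ on $\partial D$, integration by parts yields $\int_{\Omega_0}|\nabla w|^2=0$, hence $w$ is constant on $\Omega_0$; the zero boundary data on $\partial D$ forces $w\equiv0$. For $k=2$, test against $w$ over all of $D$ (now $w\in H^1_0(D)$ after subtracting boundary data) and use the Neumann condition on both sides of $\partial D_i$ to get $\int_D|\nabla w|^2=0$. For $k=3$, combine the two arguments: $\nabla w=0$ on $D_2$, Neumann on $\partial D_1^0$, and the single flux condition on $\partial D_2$, again giving $\int_{\Omega_0\cup D_1^0}|\nabla w|^2=0$ and then $w\equiv 0$. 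Thus $\mathcal{L}^{0,k}_{u_0^k}$ is injective.

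\textbf{Surjectivity.} Given $f\in Y^{k}$, I need $\tilde v\in X^{k}$ with $\mathrm{div}(A_0^k\nabla\tilde v)=f$. This is exactly the statement that the corresponding (perfect/insulated/mixed) conductivity problem with source $f$ and boundary data $\varphi$ is solvable in the required regularity class. Existence and uniqueness of the $H^1$ weak solution follows from the Lax--Milgram theorem applied on the appropriate closed subspace of $H^1$ (functions constant on the perfect conductors, satisfying the interface and flux constraints); the constraint that $f=0$ on the perfectly conducting components, encoded in $Y^k$, is precisely what is needed for the variational problem on that constrained subspace to be consistent. The interior $C^{2,\gamma}(\overline{\Omega}_0)$ regularity — and $C^{2,\gamma}$ up to the inclusions for $k=2$ and on $\overline{D_1^0}$ for $k=3$ — follows from standard Schauder/elliptic transmission regularity theory: the interfaces $\partial D_1^0,\partial D_2$ are $C^{\widetilde m}$ hence at least $C^{2,\gamma}$ for the relevant range of $m$ (this is where the definition \eqref{alpha} of $\gamma$, chosen so that $\partial\mathcal B_1^{(m)}$ is $C^{1,\gamma}$ near its flat points for $2<m<3$, enters), $f\in C^{0,\gamma}$ on each piece, and $\varphi\in C^{2,\alpha}$. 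This produces $\tilde v\in X^k$ with $\mathcal{L}^{0,k}_{u_0^k}\tilde v=f$, giving surjectivity.

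\textbf{Conclusion and main obstacle.} With $\mathcal{L}^{0,k}_{u_0^k}$ a continuous (Proposition \ref{QAT001}) bijection between Banach spaces, the open mapping theorem gives a bounded inverse, so it is an isomorphism. The routine parts are the Lax--Milgram and integration-by-parts computations; the genuinely delicate point is the \emph{boundary regularity at the inclusion interfaces} — verifying that for $2<m<3$, where the curvilinear square $\partial\mathcal B_1^{(m)}$ is only $C^{1,\gamma}$ (not $C^2$) at the points where a coordinate hyperplane meets it, the solution still gains $C^{2,\gamma}$ regularity up to the interface from each side. This forces the particular choice $\gamma=\min\{\alpha,m-2\}$ and requires the transmission Schauder estimates for domains with merely $C^{1,\gamma}$ boundary; I would cite the relevant elliptic transmission regularity results (e.g.\ of Li--Nirenberg type) rather than reprove them. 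A secondary subtlety is checking that the variational spaces encoding the flux constraints $\int_{\partial D_i}\partial_\nu\tilde v|_+=0$ are indeed closed subspaces of $H^1$ on which the bilinear form is coercive, so that Lax--Milgram applies cleanly and the free constants $C_i^0$ emerge as Lagrange multipliers.
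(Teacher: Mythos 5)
Your argument is correct in substance, and the injectivity half coincides with the paper's (the paper simply invokes uniqueness of the weak solution for $\mathrm{div}(A_0^k\nabla w)=0$ with $w=0$ on $\partial D$; your energy-method justification is the standard proof of that uniqueness). For surjectivity, however, you take a genuinely different route. The paper is constructive: for $k=1$ it introduces the auxiliary harmonic functions $\tilde v_1,\tilde v_2$ (equal to $1$ on one inclusion, $0$ on the other and on $\partial D$) and $\tilde v_3$ (carrying $f$ and $\varphi$), reduces the flux constraints $\int_{\partial D_i}\partial_\nu\tilde u|_+=0$ to a $2\times2$ linear system for the free constants $\widetilde C_1,\widetilde C_2$ solved by Cramer's rule, and sets $\tilde u=\widetilde C_1\tilde v_1+\widetilde C_2\tilde v_2+\tilde v_3$; for $k=2,3$ it first solves the exterior Neumann (or mixed) problem in $\Omega_0$ and then glues in interior Dirichlet solutions using the exterior trace. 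You instead apply Lax--Milgram on the constrained subspace of $H^1$ and let the flux/Neumann conditions emerge as natural boundary conditions of the variational formulation, followed by Schauder regularity. Both are legitimate; the paper's explicit decomposition has the advantage that it is exactly the decomposition $v^1=C_1^hv_1+C_2^hv_2+v_0$ reused in the stability part of the proof of Theorem \ref{thm001} (and its invertibility of the flux matrix is what your ``coercivity on the constrained subspace'' encodes), whereas your route is shorter but defers everything to cited elliptic theory. Two small inaccuracies worth fixing: the interfaces are $\partial\mathcal B_1^{(m)}$, not $C^{\widetilde m}$ hypersurfaces ($\widetilde m$ only enters the cutoff $\eta$), and for $2<m<3$ the boundary is in fact $C^{2,m-2}$ (the graph function $(1-\sum_{i<n}|x_i|^m)^{1/m}$ has H\"older-continuous second derivatives), not merely $C^{1,\gamma}$; the constraint $\gamma\le m-2$ in \eqref{alpha} comes from this limited $C^{2,\gamma}$ smoothness of the interface, and the conclusion you draw from it is nevertheless the right one. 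Also, the flux conditions are natural boundary conditions rather than Lagrange-multiplier constraints, but this does not affect the validity of the Lax--Milgram step.
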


\begin{proof}
For $k=1,2,3$, if $\tilde{v}_{i}\in X^{k}$, $i=1,2$ satisfy $\mathcal{L}^{0,k}_{u_{0}^{k}}\tilde{v}_{1}=\mathcal{L}^{0,k}_{u_{0}^{k}}\tilde{v}_{2}$, then $\mathrm{div}(A_{0}^{k}\nabla(\tilde{v}_{1}-\tilde{v}_{2}))=0$. Since $\tilde{v}_{1}-\tilde{v}_{2}=0$ on $\partial D$, it then follows from the uniqueness of weak solution that $\tilde{v}_{1}=\tilde{v}_{2}$ in $D$. Then for $k=1,2,3,$ the map $\mathcal{L}^{0,k}_{u_{0}^{k}}:X^{k}\rightarrow Y^{k}$ is injective. In the following we divide into three subcases to verify that for $k=1,2,3,$ $\mathcal{L}^{0,k}_{u_{0}^{k}}:X^{k}\rightarrow Y^{k}$ is surjective.


{\bf Case 1.}
For any $f\in Y^{1}$, let $\tilde{v}_{i}\in C^{2,\gamma}(\overline{\Omega}_{0})$, $i=1,2,3$ be, respectively, the solutions of the following equations
\begin{align*}
\begin{cases}
\Delta \tilde{v}_{1}=0,&\mathrm{in}\;\Omega_{0},\\
\tilde{v}_{1}=1,&\mathrm{on}\;\overline{D_{1}^{0}},\\
\tilde{v}_{2}=0,&\mathrm{on}\;\overline{D}_{2}\cup\partial D,
\end{cases}\quad\begin{cases}
\Delta \tilde{v}_{2}=0,&\mathrm{in}\;\Omega_{0},\\
\tilde{v}_{2}=1,&\mathrm{on}\;\overline{D}_{2},\\
\tilde{v}_{2}=0,&\mathrm{on}\;\overline{D_{1}^{0}}\cup\partial D,
\end{cases}
\end{align*}
and
\begin{align*}
\begin{cases}
\Delta \tilde{v}_{3}=f,&\mathrm{in}\;\Omega_{0},\\
\tilde{v}_{3}=0,&\mathrm{on}\;\overline{D_{1}^{0}\cup D_{2}},\\
\tilde{v}_{3}=\varphi,&\mathrm{on}\;\partial D.
\end{cases}
\end{align*}
For $j=1,2,$ denote
\begin{align*}
\tilde{a}_{1j}=\int_{\partial D_{1}^{0}}\frac{\partial \tilde{v}_{j}}{\partial\nu}\Big|_{+},\quad \tilde{a}_{2j}=\int_{\partial D_{2}}\frac{\partial \tilde{v}_{j}}{\partial\nu}\Big|_{+},
\end{align*}
and
\begin{align*}
\tilde{b}_{1}:=\tilde{b}_{1}[f,\varphi]=\int_{\partial D_{1}^{0}}\frac{\partial \tilde{v}_{3}}{\partial\nu}\Big|_{+},\quad \tilde{b}_{2}:=\tilde{b}_{2}[f,\varphi]=\int_{\partial D_{2}}\frac{\partial \tilde{v}_{3}}{\partial\nu}\Big|_{+}.
\end{align*}
Let
\begin{align*}
\widetilde{C}_{1}:=\widetilde{C}_{1}[f,\varphi]=\frac{\tilde{a}_{12}\tilde{b}_{2}-\tilde{b}_{1}\tilde{a}_{22}}{\tilde{a}_{11}\tilde{a}_{22}-\tilde{a}_{12}\tilde{a}_{21}},\quad \widetilde{C}_{2}:=\widetilde{C}_{2}[f,\varphi]=\frac{\tilde{a}_{21}\tilde{b}_{1}-\tilde{b}_{2}\tilde{a}_{11}}{\tilde{a}_{11}\tilde{a}_{22}-\tilde{a}_{12}\tilde{a}_{21}}.
\end{align*}
In view of $f\in C^{0,\gamma}(\overline{\Omega}_{0})$, it then follows from the classical elliptic theories that there exists a unique solution $\tilde{u}\in H^{1}(D)\cap C^{2,\gamma}(\overline{\Omega}_{0})$ such that
\begin{align*}
\begin{cases}
\Delta \tilde{u}=f,&\mathrm{in}\;\Omega_{0},\\
\tilde{u}=\widetilde{C}_{1},&\mathrm{on}\;\overline{D_{1}^{0}},\\
\tilde{u}=\widetilde{C}_{2},&\mathrm{on}\;\overline{D_{2}},\\
\tilde{u}=\varphi,&\mathrm{on}\;\partial D.
\end{cases}
\end{align*}
Note that $\tilde{u}=\widetilde{C}_{1}\tilde{v}_{1}+\widetilde{C}_{2}\tilde{v}_{2}+\tilde{v}_{3}$ in $\Omega_{0}$. This, together with a direct calculation, shows that $\int_{\partial D_{1}^{0}}\frac{\partial\tilde{u}}{\partial\nu}\big|_{+}=\int_{\partial D_{2}}\frac{\partial\tilde{u}}{\partial\nu}\big|_{+}=0$. Therefore, $\tilde{u}\in X^{1}$ satisfies $\mathcal{L}^{0,1}_{u_{0}^{1}}\tilde{u}=f$.

{\bf Case 2.} For any $f\in Y^{2}$, we have from the classical elliptic theories that there exists a unique solution $\tilde{u}_{1}\in C^{2,\gamma}(\overline{\Omega}_{0})$ for equation $\mathrm{div}(A_{0}^{2}\nabla \tilde{u}_{1})=f$ satisfying that $\tilde{u}_{1}=\varphi$ on $\partial D$ and $\frac{\partial\tilde{u}_{1}}{\partial\nu}|_{+}=0$ on $\partial D_{1}^{0}\cup \partial D_{2}$. Since $\tilde{u}_{1}\in C^{2,\gamma}(\partial D_{1}^{0}\cup \partial D_{2})$ and $f\in C^{0,\gamma}(\overline{D_{1}^{0}\cup D_{2}})$, it then follows from the elliptic theories again that there exists unique solutions $\tilde{u}_{2}\in C^{2,\gamma}(\overline{D_{1}^{0}})$ and $\tilde{u}_{3}\in C^{2,\gamma}(\overline{D}_{2})$ such that
\begin{align*}
\begin{cases}
\Delta\tilde{u}_{2}=f,&\mathrm{in}\; D_{1}^{0},\\
\tilde{u}_{2}=\tilde{u}_{1},&\mathrm{on}\;\partial D_{1}^{0},
\end{cases}\quad
\begin{cases}
\Delta\tilde{u}_{3}=f,&\mathrm{in}\; D_{2},\\
\tilde{u}_{3}=\tilde{u}_{1},&\mathrm{on}\;\partial D_{2}.
\end{cases}
\end{align*}
Take
\begin{align*}
\tilde{u}=
\begin{cases}
\tilde{u}_{1},&\mathrm{in}\;\overline{\Omega}_{0},\\
\tilde{u}_{2},&\mathrm{in}\;D_{1}^{0},\\
\tilde{u}_{3},&\mathrm{in}\; D_{2}.
\end{cases}
\end{align*}
Then $\tilde{u}\in X^{2}$ solves $\mathcal{L}^{0,2}_{u_{0}^{2}}\tilde{u}=f$.

{\bf Case 3.} For any $f\in Y^{3}$, let $\tilde{v}_{i}\in C^{2,\gamma}(\overline{\Omega}_{0})$, $i=1,2$, respectively, satisfy
\begin{align*}
\begin{cases}
\Delta \tilde{v}_{1}=0,&\mathrm{in}\;\Omega_{0},\\
\frac{\partial \tilde{v}_{1}}{\partial\nu}\big|_{+}=0,&\mathrm{on}\;\partial D_{1}^{0},\\
\tilde{v}_{1}=1,&\mathrm{on}\;\overline{D}_{2},\\
\tilde{v}_{1}=0,&\mathrm{on}\;\partial D,
\end{cases}\quad\begin{cases}
\Delta \tilde{v}_{2}=f,&\mathrm{in}\;\Omega_{0},\\
\frac{\partial \tilde{v}_{2}}{\partial\nu}\big|_{+}=0,&\mathrm{on}\;\partial D_{1}^{0},\\
\tilde{v}_{2}=0,&\mathrm{on}\;\overline{D}_{2},\\
\tilde{v}_{2}=\varphi,&\mathrm{on}\;\partial D.
\end{cases}
\end{align*}
Define
\begin{align*}
\widetilde{C}_{0}:=\frac{\widetilde{Q}[f,\varphi]}{\tilde{a}_{21}},\quad\tilde{a}_{21}=\int_{\partial D_{2}}\frac{\partial \tilde{v}_{1}}{\partial\nu}\Big|_{+},\;\, \widetilde{Q}[f,\varphi]=-\int_{\partial D_{2}}\frac{\partial \tilde{v}_{2}}{\partial\nu}\Big|_{+}.
\end{align*}
Since $f\in C^{0,\gamma}(\overline{\Omega}_{0})$, we obtain from the classical elliptic theories that there exists a unique solution $\tilde{u}_{1}\in C^{2,\gamma}(\overline{\Omega}_{0})$ satisfying
\begin{align*}
\begin{cases}
\Delta \tilde{u}_{1}=f,&\mathrm{in}\;\Omega_{0},\\
\frac{\partial\tilde{u}_{1}}{\partial\nu}\big|_{+}=0,&\mathrm{on}\;\partial D_{1}^{0},\\
\tilde{u}_{1}=\widetilde{C}_{0},&\mathrm{on}\;\overline{D}_{2},\\
\tilde{u}_{1}=\varphi,&\mathrm{on}\;\partial D.
\end{cases}
\end{align*}
Observe that $\tilde{u}_{1}\in C^{2,\gamma}(\partial D_{1}^{0})$ and $f\in C^{0,\gamma}(\overline{D_{1}^{0}})$. Then using the elliptic theories again, we deduce that there exists a unique solution $\tilde{u}_{2}\in C^{2,\gamma}(\overline{D_{1}^{0}})$ such that
\begin{align*}
\begin{cases}
\Delta\tilde{u}_{2}=f,&\mathrm{in}\; D_{1}^{0},\\
\tilde{u}_{2}=\tilde{u}_{1},&\mathrm{on}\;\partial D_{1}^{0}.
\end{cases}
\end{align*}
Let
\begin{align*}
\tilde{u}=
\begin{cases}
\tilde{u}_{1},&\mathrm{in}\;\overline{D\setminus D_{1}^{0}},\\
\tilde{u}_{2},&\mathrm{in}\;D_{1}^{0}.
\end{cases}
\end{align*}
Observe that $\tilde{u}=\widetilde{C}_{0}\tilde{v}_{1}+\tilde{v}_{2}$ in $\Omega_{0}$. Then by a direct computation, we derive $\int_{\partial D_{2}}\frac{\partial\tilde{u}}{\partial\nu}\big|_{+}=0$. Hence, $\tilde{u}\in X^{3}$ solves $\mathcal{L}^{0,3}_{u_{0}^{3}}\tilde{u}=f$. The proof is complete.

\end{proof}

We are now ready to give the proof of Theorem \ref{thm001}.

\begin{proof}[Proof of Theorem \ref{thm001}]
Combining Propositions \ref{QAT001} and \ref{QAT002}, we deduce from the implicit function theorem (see Theorem 2.7.2 in \cite{N2001}) that for $k=1,2,3,$ there exists a small positive constant $h_{k}=h_{k}(\Omega_{0},\|\varphi\|_{C^{2}(\partial D)})$ such that $\tilde{u}^{k}_{h}\in C^{\infty}((-h_{k},h_{k}))$, where $\tilde{u}_{h}^{k}$ is the solution of \eqref{ZWQ001Z66}. By taking $u^{k}_{h}(x)=\tilde{u}^{k}_{h}(y)$ with $y=\psi(x)$, we obtain that $u^{k}_{h}\in C^{\infty}((-h_{k},h_{k}))$, where $u_{h}^{k}$ satisfies equation \eqref{con002}. It remains to establish the stability of $u^{k}_{h}$ with respect to $h$. For the convenience of notations, let $v^{k}(h,x):=u^{k}_{h}(x)$ in the following.

{\bf Case 1.} Consider the case of $k=1$. For any fixed $h\in(-h_{1},h_{1})$, the solution $v^{1}(h,x)$ of problem \eqref{con002} can be split as follows:
\begin{align}\label{DEC001}
v^{1}(h,x)=C_{1}^{h}v_{1}(h,x)+C_{2}^{h}v_{2}(h,x)+v_{0}(h,x),\quad\mathrm{in}\;\Omega_{h},
\end{align}
where $v_{i}$, $i=0,1,2$, respectively, solve
\begin{align*}
\begin{cases}
\Delta_{x} v_{0}=0,&\mathrm{in}\;\Omega_{h},\\
v_{0}=0,&\mathrm{on}\;\overline{D_{1}^{h}\cup D_{2}},\\
v_{0}=\varphi,&\mathrm{on}\;\partial D,
\end{cases}
\end{align*}
and
\begin{align*}
\begin{cases}
\Delta_{x} v_{1}=0,&\mathrm{in}\;\Omega_{h},\\
v_{1}=1,&\mathrm{on}\;\overline{D_{1}^{h}},\\
v_{1}=0,&\mathrm{on}\;\overline{D}_{2}\cup\partial D,
\end{cases}\quad\begin{cases}
\Delta_{x} v_{2}=0,&\mathrm{in}\;\Omega_{h},\\
v_{2}=1,&\mathrm{on}\;\overline{D}_{2},\\
v_{2}=0,&\mathrm{on}\;\overline{D_{1}^{h}}\cup\partial D.
\end{cases}
\end{align*}
Substituting \eqref{DEC001} into the fourth line of \eqref{con006}, we have
\begin{align*}
\begin{cases}
a_{11}^{h}C_{1}^{h}+a_{12}^{h}C_{2}^{h}+b_{1}^{h}=0,\\
a_{21}^{h}C_{1}^{h}+a_{22}^{h}C_{2}^{h}+b_{2}^{h}=0,
\end{cases}
\end{align*}
where $a_{ij}^{h}$ and $b_{i}^{h}$, $i,j=1,2,$ are defined by
\begin{align*}
a_{1j}^{h}=\int_{\partial D_{1}^{h}}\frac{\partial v_{j}}{\partial\nu},\quad a_{2j}^{h}=\int_{\partial D_{2}}\frac{\partial v_{j}}{\partial\nu},\quad b_{1}^{h}=\int_{\partial D_{1}^{h}}\frac{\partial v_{0}}{\partial\nu},\quad b_{2}^{h}=\int_{\partial D_{2}}\frac{\partial v_{0}}{\partial\nu}.
\end{align*}
By Cramer's rule, we derive
\begin{align*}
C_{1}^{h}=\frac{a_{12}^{h}b_{2}^{h}-b_{1}^{h}a_{22}^{h}}{a_{11}^{h}a_{22}^{h}-a_{12}^{h}a_{21}^{h}},\quad C_{2}^{h}=\frac{a_{21}^{h}b_{1}^{h}-b_{2}^{h}a_{11}^{h}}{a_{11}^{h}a_{22}^{h}-a_{12}^{h}a_{21}^{h}}.
\end{align*}
Integrating by parts, we obtain
\begin{align*}
a_{ij}^{h}=\int_{\Omega_{h}}\nabla v_{i}\nabla v_{j},\quad b_{i}^{h}=\int_{\Omega_{h}}\nabla v_{0}\nabla v_{i},\quad i,j=1,2.
\end{align*}

From the mean value theorem and the boundary estimates for elliptic equations, we obtain that for some $\theta_{i}\in(0,1)$, $i=0,1,2,$

$(1)$ for $x\in(\partial D_{1}^{h}\setminus D_{1}^{0})\cap\{x_{n}\geq3+h\}$ if $h\geq0$, or for $x\in(\partial D_{1}^{h}\setminus D_{1}^{0})\cap\{x_{n}<3+h\}$ if $h<0,$
\begin{align}\label{DEF90681}
|v_{i}(h,x)-v_{i}(0,x)|=&|v_{i}(0,x',x_{n}-h)-v_{i}(0,x',x_{n})|\notag\\
=&|\partial_{x_{n}} v_{i}(0,x',x_{n}-\theta_{i}h)||h|\leq C|h|;
\end{align}

$(2)$ for $x\in(\partial D_{1}^{h}\setminus D_{1}^{0})\cap\{1-(|h|/2)^{m}\leq\sum^{n-1}_{j=1}|x_{j}|^{m}\leq1\}\cap\{x_{n}\leq3+h\}$ if $h\geq0$, or for $x\in(\partial D_{1}^{h}\setminus D_{1}^{0})\cap\{1-(|h|/2)^{m}\leq\sum^{n-1}_{j=1}|x_{j}|^{m}\leq1\}\cap\{x_{n}>3+h\}$ if $h<0$,
\begin{align}\label{QMWD9086}
|v_{i}(h,x)-v_{i}(0,x)|=&\bigg|v_{i}\bigg(0,x',3+\mathrm{sgn}(h)\bigg(1-\sum^{n-1}_{j=1}|x_{i}|^{m}\bigg)^{1/m}\bigg)-v_{i}(0,x)\bigg|\notag\\
=&|\partial_{x_{n}} v_{i}(0,x',x_{n}-\theta_{i}h)|\bigg(|h|-2\bigg(1-\sum^{n-1}_{j=1}|x_{i}|^{m}\bigg)^{1/m}\bigg)\notag\\
\leq& C\bigg(|h|-2\bigg(1-\sum^{n-1}_{j=1}|x_{i}|^{m}\bigg)^{1/m}\bigg),
\end{align}
where $\mathrm{sgn}$ is the sign function such that $\mathrm{sgn}(h)=1$ if $h>0$, and $\mathrm{sgn}(h)=-1$ if $h<0$. In contrast to \eqref{DEF90681}, we can capture a smaller pointwise difference between $v_{i}(h,x)$ and $v_{i}(0,x)$ on the boundary near the corner in \eqref{QMWD9086}. In exactly the same way, we deduce that for $x\in\partial D_{1}^{0}\setminus D_{1}^{h}$,
\begin{align*}
|v_{i}(h,x)-v_{i}(0,x)|=|v_{i}(h,x',x_{n})-v_{i}(h,x',x_{n}+h)|\leq C|h|.
\end{align*}
These two relations, in combination with the fact that $v_{i}(h,x)-v_{i}(0,x)=0$ on $\partial D_{2}\cup\partial D$, gives that
\begin{align*}
|v_{i}(h,x)-v_{i}(0,x)|\leq C|h|,\quad\mathrm{on}\;\partial(D\setminus\overline{D_{1}^{0}\cup D_{1}^{h}\cup D_{2}}).
\end{align*}
By the maximum principle, we further have
\begin{align*}
|v_{i}(h,x)-v_{i}(0,x)|\leq C|h|,\quad\mathrm{in}\;D\setminus\overline{D_{1}^{0}\cup D_{1}^{h}\cup D_{2}}.
\end{align*}
For $i=0,1,2,$ denote
\begin{align*}
V_{i}(h,x)=\frac{v_{i}(h,x)}{h},\quad h\in(-h_{1},h_{1}).
\end{align*}
Then we have $|V_{i}(h,x)-V_{i}(0,x)|\leq C\,\;\mathrm{in}\;D\setminus\overline{D_{1}^{0}\cup D_{1}^{h}\cup D_{2}}.$ This, together with the standard elliptic estimates, shows that
\begin{align*}
\|V_{i}(h,\cdot)-V_{i}(0,\cdot)\|_{C^{2}(\overline{D\setminus (D_{1}^{0}\cup D_{1}^{h}\cup D_{2})})}\leq C,
\end{align*}
and thus
\begin{align*}
\|v_{i}(h,\cdot)-v_{i}(0,\cdot)\|_{C^{2}(\overline{D\setminus (D_{1}^{0}\cup D_{1}^{h}\cup D_{2})})}\leq C|h|,
\end{align*}
which leads to that for $i,j=1,2,$
\begin{align*}
a_{ij}^{h}=&\int_{\Omega_{h}}\nabla v_{i}(h,x)\nabla v_{j}(h,x)=\int_{D\setminus\overline{D_{1}^{0}\cup D_{1}^{h}\cup D_{2}}}\nabla v_{i}(h,x)\nabla v_{j}(h,x)+O(|h|)\notag\\
=&\int_{D\setminus\overline{D_{1}^{0}\cup D_{1}^{h}\cup D_{2}}}[\nabla v_{i}(0,x)\nabla v_{j}(0,x)+(\nabla v_{i}(h,x)-\nabla v_{i}(0,x))\nabla v_{j}(h,x)]\notag\\
&+\int_{D\setminus\overline{D_{1}^{0}\cup D_{1}^{h}\cup D_{2}}}\nabla v_{i}(0,x)(\nabla v_{j}(h,x)-\nabla v_{j}(0,x))+O(|h|)\notag\\
=&a_{ij}^{0}+O(|h|).
\end{align*}
By the same argument, we also have $b_{i}^{h}=b_{i}^{0}+O(|h|)$, $i=1,2.$ Hence we obtain
\begin{align}\label{DZQAM001}
C_{i}^{h}=C_{i}^{0}+O(|h|),\quad i=1,2.
\end{align}
Observe that for $i=0,1,2,$ $v_{i}(h,x)=v_{i}(0,x)$ on $\overline{D_{1}^{0}\cap D_{1}^{h}}$. Then for $x\in D_{1}^{h}\setminus\overline{D_{1}^{0}}$, we deduce from the standard elliptic estimates that
\begin{align*}
&|v_{i}(h,x',x_{n})-v_{i}(0,x',x_{n})|\notag\\
&=\bigg|v_{i}(0,x',x_{n})-v_{i}\Big(0,x',3+\mathrm{sgn}(h)\Big(1-\sum^{n-1}_{i=1}|x_{i}|^{m}\Big)^{1/m}\Big)\bigg|\leq C|h|,
\end{align*}
where $\mathrm{sgn}$ is the sign function with respect to $h$. In exactly the same way, we obtain that for $x\in D_{1}^{0}\setminus\overline{D_{1}^{h}}$,
\begin{align*}
&|v_{i}(h,x',x_{n})-v_{i}(0,x',x_{n})|\notag\\
&=\bigg|v_{i}(h,x',x_{n})-v_{i}\Big(h,x',3+h-\mathrm{sgn}(h)\Big(1-\sum^{n-1}_{i=1}|x_{i}|^{m}\Big)^{1/m}\Big)\bigg|\leq C|h|.
\end{align*}
Combining these above facts, we obtain that for $h\in(-h_{1},h_{1})$,
\begin{align}\label{ZMQ006}
\|v_{i}(h,\cdot)-v_{i}(0,\cdot)\|_{L^{\infty}(\overline{D})}=O(|h|),\quad i=0,1,2.
\end{align}
Observe that
\begin{align}\label{ZMQ009}
v^{1}(h,x)-v^{1}(0,x)=&\sum^{2}_{i=1}\left(C_{i}^{h}(v_{i}(h,x)-v_{i}(0,x))+(C_{i}^{h}-C_{i}^{0})v_{i}(0,x)\right)\notag\\
&+v_{0}(h,x)-v_{0}(0,x).
\end{align}
Then inserting \eqref{DZQAM001}--\eqref{ZMQ006} into \eqref{ZMQ009}, we deduce
\begin{align*}
\|v^{1}(h,\cdot)-v^{1}(0,\cdot)\|_{L^{\infty}(\overline{D})}=O(|h|),\quad h\in(-h_{1},h_{1}).
\end{align*}

{\bf Case 2.} Consider the case of $k=2$. To begin with, for $h\in(-h_{2},h_{2})$, in view of $\frac{\partial v^{2}(0,x)}{\partial\nu}|_{+}=0$ on $\partial D_{1}^{0}\cup \partial D_{2}$ and $\frac{\partial v^{2}(h,x)}{\partial\nu}|_{+}=0$ on $\partial D_{1}^{h}\cup\partial D_{2}$, it follows from the standard elliptic estimates that $x\in \partial D_{1}^{h}\setminus D_{1}^{0}$,
\begin{align*}
\left|\frac{\partial (v^{2}(h,x)-v^{2}(0,x))}{\partial\nu}\Big|_{+}\right|=&\left|\frac{\partial v^{2}(0,x',x_{n}-h)}{\partial\nu}\Big|_{+}-\frac{\partial v^{2}(0,x',x_{n})}{\partial\nu}\Big|_{+}\right|\notag\\
\leq&|\nabla(v^{2}(0,x',x_{n}-h)-v^{2}(0,x',x_{n}))|\leq C|h|,
\end{align*}
and, for $x\in\partial D_{1}^{0}\setminus D_{1}^{h}$,
\begin{align*}
\left|\frac{\partial (v^{2}(h,x)-v^{2}(0,x))}{\partial\nu}\Big|_{+}\right|=&\left|\frac{\partial v^{2}(h,x',x_{n})}{\partial\nu}\Big|_{+}-\frac{\partial v^{2}(h,x',x_{n}+h)}{\partial\nu}\Big|_{+}\right|\notag\\
\leq&|\nabla(v^{2}(h,x',x_{n})-v^{2}(h,x',x_{n}+h))|\leq C|h|.
\end{align*}
Then we have
\begin{align}\label{NED001}
\frac{\partial (v^{2}(h,x)-v^{2}(0,x))}{\partial\nu}\Big|_{+}=
\begin{cases}
O(|h|),& \mathrm{on}\;(\partial D_{1}^{h}\setminus D_{1}^{0})\cup(\partial D_{1}^{0}\setminus D_{1}^{h}),\\
0,&\mathrm{on}\;\partial D_{2}.
\end{cases}
\end{align}
For simplicity, denote $w(h,x):=v^{2}(h,x)-v^{2}(0,x)$ and $\widetilde{\Omega}_{h}:=D\setminus\overline{D_{1}^{0}\cup D_{1}^{h}\cup D_{2}}$. Then $w(h,x)$ solves
\begin{align*}
\begin{cases}
\Delta_{x} w(h,x)=0,&\mathrm{in}\;\widetilde{\Omega}_{h},\\
w(h,x)=0,&\mathrm{on}\;\partial D.
\end{cases}
\end{align*}
Multiplying this equation by $|w(h,x)|^{p-2}w(h,x)$ with $p\geq2$, it then follows from \eqref{NED001}, integration by parts, H\"{o}lder's inequality, Trace Theorem and Poincar\'{e}'s inequality that for $h\in(-h_{2},h_{2}),$
\begin{align*}
&(p-1)\int_{\widetilde{\Omega}_{h}}|\nabla w(h,x)|^{2}|w(h,x)|^{p-2}=\int_{\partial\widetilde{\Omega}_{h}}\frac{\partial w(h,x)}{\partial\nu}\Big|_{+}\cdot w(h,x)|w(h,x)|^{p-2}\notag\\
&\leq C|h|\int_{(\partial D_{1}^{h}\setminus D_{1}^{0})\cup(\partial D_{1}^{0}\setminus D_{1}^{h})}|w(h,x)|^{p-1}\leq C|h|\||w(h,\cdot)|^{p-1}\|_{W^{1,1}(\widetilde{\Omega}_{h})}\notag\\
&\leq C|h|\|\nabla |w(h,\cdot)|^{p-1}\|_{L^{1}(\widetilde{\Omega}_{h})}=C|h|(p-1)\int_{\widetilde{\Omega}_{h}}|\nabla w(h,x)||w(h,x)|^{p-2}\notag\\
&\leq C|h|(p-1)\left(\int_{\widetilde{\Omega}_{h}}|\nabla w(h,x)|^{2}|w(h,x)|^{p-2}\right)^{\frac{1}{2}}\left(\int_{\widetilde{\Omega}_{h}}|w(h,x)|^{p-2}\right)^{\frac{1}{2}}\notag\\
&\leq C|h|(p-1)\left(\int_{\widetilde{\Omega}_{h}}|\nabla w(h,x)|^{2}|w(h,x)|^{p-2}\right)^{\frac{1}{2}}\|w(h,\cdot)\|_{L^{p}(\widetilde{\Omega}_{h})}^{\frac{p-2}{2}},
\end{align*}
which implies that
\begin{align}\label{ZWQA05}
\left(\int_{\widetilde{\Omega}_{h}}\big|\nabla|w(h,x)|^{\frac{p}{2}}\big|^{2}\right)^{\frac{1}{2}}\leq Cp|h|\|w(h,\cdot)\|_{L^{p}(\widetilde{\Omega}_{h})}^{\frac{p-2}{2}}.
\end{align}
Applying Sobolev-Poincar\'{e} inequalities to \eqref{ZWQA05}, we obtain
\begin{align}\label{ZWQA08}
\|w(h,\cdot)\|_{L^{tp}(\widetilde{\Omega}_{h})}\leq& (Cp^{2})^{1/p}|h|^{2/p}\|w(h,\cdot)\|_{L^{p}(\widetilde{\Omega}_{h})}^{\frac{p-2}{p}},
\end{align}
where $t:=t(n)$ is given by
\begin{align*}
\begin{cases}
t>1,&n=2,\\
t=\frac{n}{n-2},&n>2.
\end{cases}
\end{align*}
Let
\begin{align*}
p_{k}=2t^{k},\quad k\geq0,\,n\geq2.
\end{align*}
By iteration with \eqref{ZWQA08}, we deduce that for $k>2$,
\begin{align*}
\|w(h,\cdot)\|_{L^{p_{k}}(\widetilde{\Omega}_{h})}\leq&\prod^{k-1}_{i=0}(Cp_{i}^{2})^{1/p_{i}}|h|^{\frac{2}{p_{k-1}}+\sum\limits^{k-2}_{j=0}\frac{2}{p_{j}}\prod\limits^{k-1}_{l=j+1}(1-\frac{2}{p_{l}})}\notag\\
\leq& C|h| t^{\sum\limits^{k-1}_{i=0}\frac{i}{t^{i}}}\leq C|h|,
\end{align*}
where $C$ is independent of $k$. By sending $k\rightarrow\infty$, we obtain
\begin{align}\label{AMYW001}
\|v^{2}(h,\cdot)-v^{2}(0,\cdot)\|_{L^{\infty}(\overline{D\setminus(D_{1}^{0}\cup D_{1}^{h}\cup D_{2})})}\leq C|h|.
\end{align}
It is worthwhile to emphasize that \eqref{AMYW001} is proved by using Moser's iteration argument. For $x=(x',x_{n})\in D_{1}^{h}\setminus D_{1}^{0}$, denote
\begin{align*}
I_{1}:=&v^{2}\Big(h,x',3+h+\mathrm{sgn}(h)\Big(1-\sum^{n-1}_{i=1}|x_{i}|^{m}\Big)^{1/m}\Big)\notag\\
&-v^{2}\Big(0,x',3+h+\mathrm{sgn}(h)\Big(1-\sum^{n-1}_{i=1}|x_{i}|^{m}\Big)^{1/m}\Big),\\
I_{2}:=&v^{2}(h,x',x_{n})-v^{2}\Big(h,x',3+h+\mathrm{sgn}(h)\Big(1-\sum^{n-1}_{i=1}|x_{i}|^{m}\Big)^{1/m}\Big),\\
I_{3}:=&v^{2}\Big(0,x',3+h+\mathrm{sgn}(h)\Big(1-\sum^{n-1}_{i=1}|x_{i}|^{m}\Big)^{1/m}\Big)-v^{2}(0,x',x_{n}).
\end{align*}
From \eqref{AMYW001}, we have $|I_{1}|\leq C|h|$. Applying the standard elliptic estimates for $v^{2}(h,x)$ in $\overline{D_{1}^{h}}$ and $v^{2}(0,x)$ in $\overline{D\setminus(D_{1}^{0}\cup D_{2})}$, we obtain that $|I_{2}|\leq C|h|$ and $I_{3}\leq C|h|$. Then we obtain
\begin{align*}
\|v^{2}(h,\cdot)-v^{2}(0,\cdot)\|_{L^{\infty}(\overline{D_{1}^{h}\setminus D_{1}^{0}})}\leq C|h|.
\end{align*}
In exactly the same way, we also have $\|v^{2}(h,x)-v^{2}(0,h)\|_{L^{\infty}(\overline{D_{1}^{0}\setminus D_{1}^{h}})}\leq C|h|.$ Hence we have
\begin{align}\label{ZZW006}
\|v^{2}(h,\cdot)-v^{2}(0,\cdot)\|_{L^{\infty}(\overline{(D_{1}^{h}\setminus D_{1}^{0})\cup(D_{1}^{0}\setminus D_{1}^{h})})}\leq C|h|.
\end{align}
Then applying the maximum principle for $v^{2}(h,x)-v^{2}(0,x)$ in $\overline{D_{1}^{0}\cap D_{1}^{h}}$, we derive
\begin{align*}
\|v^{2}(h,\cdot)-v^{2}(0,\cdot)\|_{L^{\infty}(\overline{D_{1}^{0}\cap D_{1}^{h}})}\leq C|h|,
\end{align*}
which, together with \eqref{AMYW001}--\eqref{ZZW006}, reads that
\begin{align}\label{QLAZM006}
\|v^{2}(h,\cdot)-v^{2}(0,\cdot)\|_{L^{\infty}(\overline{D})}\leq C|h|,\quad h\in(-h_{2},h_{2}).
\end{align}

{\bf Case 3.} Consider the case when $k=3$. Similarly as before, for any given $h\in(-h_{3},h_{3})$, we decompose the solution $v^{3}(h,x)$ of problem \eqref{PRO09} as follows:
\begin{align}\label{DEC006}
v^{3}(h,x)=C^{h}v_{1}(h,x)+v_{2}(h,x),\quad\mathrm{in}\;\Omega_{h},
\end{align}
where $v_{i}$, $i=1,2$, respectively, satisfy
\begin{align*}
\begin{cases}
\Delta_{x} v_{1}(h,x)=0,&\mathrm{in}\;D\setminus(\partial D_{1}^{h}\cup\overline{D}_{2}),\\
\frac{\partial v_{1}(h,x)}{\partial\nu}\big|_{+}=0,&\mathrm{on}\;\partial D_{1}^{h},\\
v_{1}(h,x)=1,&\mathrm{on}\;\overline{D}_{2},\\
v_{1}(h,x)=0,&\mathrm{on}\;\partial D,
\end{cases}\quad\begin{cases}
\Delta_{x} v_{2}(h,x)=0,&\mathrm{in}\;D\setminus(\partial D_{1}^{h}\cup\overline{D}_{2}),\\
\frac{\partial v_{2}(h,x)}{\partial\nu}\big|_{+}=0,&\mathrm{on}\;\partial D_{1}^{h},\\
v_{2}(h,x)=0,&\mathrm{on}\;\overline{D}_{2},\\
v_{2}(h,x)=\varphi,&\mathrm{on}\;\partial D.
\end{cases}
\end{align*}
Inserting \eqref{DEC006} into the fifth line of \eqref{PRO09}, we obtain
\begin{align*}
a_{21}^{h}C^{h}=Q^{h}[\varphi],\quad\text{and thus } C^{h}=\frac{Q^{h}[\varphi]}{a_{21}^{h}},
\end{align*}
where $a_{21}^{h}$ and $Q^{h}[\varphi]$ are given by
\begin{align*}
a_{21}^{h}=\int_{\partial D_{2}}\frac{\partial v_{1}(h,x)}{\partial\nu},\quad Q^{h}[\varphi]=-\int_{\partial D_{2}}\frac{\partial v_{2}(h,x)}{\partial\nu}.
\end{align*}
From integration by parts, we see
\begin{align*}
a_{21}^{h}=\int_{\Omega_{h}}|\nabla v_{1}(h,x)|^{2},\quad Q^{h}[\varphi]=-\int_{\Omega_{h}}\nabla v_{1}(h,x)\nabla v_{2}(h,x).
\end{align*}
By the same arguments as in \eqref{QLAZM006}, it follows from Moser's iteration argument, the standard elliptic estimates and the maximum principle that for $i=1,2,$
\begin{align*}
\|v_{i}(h,\cdot)-v_{i}(0,\cdot)\|_{L^{\infty}(\overline{D})}\leq C|h|,
\end{align*}
which, in combination with the rescale argument and the standard elliptic estimates, reads that
\begin{align*}
\|v_{i}(h,\cdot)-v_{i}(0,\cdot)\|_{C^{2}(\overline{D\setminus (D_{1}^{0}\cup D_{1}^{h}\cup D_{2})})}\leq C|h|.
\end{align*}
This leads to that
\begin{align*}
a_{21}^{h}=a_{21}^{0}+O(|h|),\quad Q^{h}[\varphi]=Q^{0}[\varphi]+O(|h|),
\end{align*}
and we thus have $C^{h}=C^{0}+O(|h|)$. Note that
\begin{align*}
v^{3}(h,x)-v^{3}(0,x)=&C^{h}(v_{1}(h,x)-v_{1}(0,x))+(C^{h}-C^{0})v_{1}(0,x)\notag\\
&+v_{2}(h,x)-v_{2}(0,x).
\end{align*}
Therefore, combining these above facts, we deduce
\begin{align*}
\|v^{3}(h,\cdot)-v^{3}(0,\cdot)\|_{L^{\infty}(\overline{D})}\leq C|h|.
\end{align*}
The proof is complete.

\end{proof}

\section{Regularity and stability for the elasticity problem}\label{SEC03}

As shown in the appendix of \cite{BLL2015}, problem \eqref{La.002} is the limiting case of the following problem
\begin{align}\label{ZWE006}
\begin{cases}
\partial_\alpha(A_{ij}^{\alpha\beta}\partial_\beta u_{h}^j)=0,&\mathrm{in}\; D,\\
u_{h}=\varphi, &\mathrm{on}\;\partial D,
\end{cases}
\end{align}
with
\begin{align*}
A_{ij}^{\alpha\beta}(x)=
\begin{cases}
\lambda_{1}\delta_{i\alpha}\delta_{j\beta}+\mu_{1}(\delta_{i\beta}\delta_{\alpha j}+\delta_{ij}\delta_{\alpha\beta}),&\mathrm{in}\;D_{1}^{h}\cup D_{2},\\
\lambda\delta_{i\alpha}\delta_{j\beta}+\mu(\delta_{i\beta}\delta_{\alpha j}+\delta_{ij}\delta_{\alpha\beta}),&\mathrm{in}\;\Omega_{h},
\end{cases}
\end{align*}
as $\mu_{1},n\lambda_{1}+2\mu_{1}\rightarrow\infty$. So problem \eqref{La.002} can be rewritten as \eqref{ZWE006} with $(\lambda_{1},\mu_{1})$ and $(\lambda,\mu)$ satisfying that $\mu_{1}=n\lambda_{1}+2\mu_{1}=\infty$, $\mu,n\lambda+2\mu\in(0,\infty)$.

Denote $\tilde{u}_{h}(y)=u_{h}(x)$, where $y=\psi(x)$. Then the original problem \eqref{La.002} becomes
\begin{align}\label{ZWQ001}
\begin{cases}
\partial_\alpha(\tilde{A}_{ij}^{\alpha\beta}\partial_\beta \tilde{u}_{h}^j)=0,&\mathrm{in}\; D,\\
\tilde{u}_{h}=\varphi, &\mathrm{on}\;\partial D,
\end{cases}
\end{align}
where, for $i,j,\alpha,\beta=1,2,...,n,$ $\tilde{A}_{ij}^{\alpha\beta}|_{D_{1}^{0}\cup D_{2}}=A_{ij}^{\alpha\beta}|_{D_{1}^{h}\cup D_{2}}$, and for $i,j=1,2,...,n,$
\begin{align*}
(\tilde{A}_{ij}^{\alpha\beta}(y))=\frac{(\partial_{x}y)(A_{ij}^{\alpha\beta})(\partial_{x}y)^{t}}{\det(\partial_{x}y)}=(\lambda\delta_{i\alpha}\delta_{j\beta}+\mu(\delta_{i\beta}\delta_{\alpha j}+\delta_{ij}\delta_{\alpha\beta}))+O(h),\quad\mathrm{in}\;\Omega_{0}.
\end{align*}
Similarly as above, define
\begin{align*}
X=\Big\{&\tilde{v}\in H^{1}(D;\mathbb{R}^{n})\cap C^{2,\gamma}(\overline{\Omega}_{0};\mathbb{R}^{n})\,|\,\text{$\nabla\tilde{v}+(\nabla\tilde{v})^{T}=0$ in $\overline{D_{1}^{0}\cup D_{2}}$, $\tilde{v}=\varphi$ on $\partial D$},\notag\\
&u|_{+}=u|_{-}\text{ on }\partial D_{1}^{0}\cap\partial D_{2},\;\int_{\partial{D}_{1}^{0}}A_{ij}^{\alpha\beta}\partial_\beta u_{h}^j\nu_{\alpha}\phi_{k}^{i}|_{+}=0,\notag\\
&\int_{\partial{D}_{2}}A_{ij}^{\alpha\beta}\partial_\beta u_{h}^j\nu_{\alpha}\phi_{k}^{i}|_{+}=0,\;k=1,2,...,\frac{n(n+1)}{2}\Big\},\notag\\
Y=\{&f\in H^{-1}(D;\mathbb{R}^{n})\cap C^{0,\gamma}(\overline{\Omega}_{0};\mathbb{R}^{n})\,|\,\text{$f=0$ in $\overline{D_{1}^{0}\cup D_{2}}$}\},
\end{align*}
with their norms as $\|\tilde{v}\|_{X}:=\|\tilde{v}\|_{H^{1}(D;\mathbb{R}^{n})}+\|\nabla\tilde{v}\|_{L^{\infty}(\frac{1}{2}D;\mathbb{R}^{n})}$ and
$$\|f\|_{Y}:=\|f\|_{H^{-1}(D;\mathbb{R}^{n})}=\sup\{\langle f,w\rangle\,|\,w\in H^{1}_{0}(D;\mathbb{R}^{n}),\,\|w\|_{H^{1}_{0}(D;\mathbb{R}^{n})}\leq1\},$$
where $\gamma$ is given by \eqref{alpha}, $\langle\, ,\rangle$ denotes the pairing between $H^{-1}(D;\mathbb{R}^{n})$ and $H_{0}^{1}(D;\mathbb{R}^{n})$. It is not difficult to demonstrate that $X$ and $Y$ are Banach spaces. Define $K:=(-\delta_{0},\delta_{0})$ and $F(h,\tilde{v}):=(F_{1}(h,\tilde{v}),...,F_{n}(h,\tilde{v}))=(\partial_\alpha(\tilde{A}_{1j}^{\alpha\beta}\partial_\beta \tilde{v}^j),...,\partial_\alpha(\tilde{A}_{nj}^{\alpha\beta}\partial_\beta \tilde{v}^j)).$

\begin{lemma}\label{QAT001V2}
The map $F$ is in $C^{\infty}(K\times X,Y)$ in the sense that $F$ possesses continuous Fr\'{e}chet derivatives of any order.

\end{lemma}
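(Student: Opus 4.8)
The plan is to mimic exactly the argument used for Proposition \ref{QAT001}, now carried out componentwise for the elasticity system. The essential structural facts are unchanged: the coefficients $\tilde{A}_{ij}^{\alpha\beta}(y)$ on $\Omega_{0}$ are built from $(\partial_{x}y)(A_{ij}^{\alpha\beta})(\partial_{x}y)^{t}/\det(\partial_{x}y)$ where $y=\psi(x)=(x',x_{n}-h\eta(x))$ with $\eta\in C^{\infty}$; hence each $\tilde{A}_{ij}^{\alpha\beta}$ is a smooth (in fact polynomial, after the single division by $\det(\partial_{x}y)=1-h\partial_{x_{n}}\eta$) function of $h$ and of $\partial_{x}\eta$, and therefore $|\partial_{h}^{s}\tilde{A}_{ij}^{\alpha\beta}|\leq C(s,m,n)$ uniformly on $\overline{K\times\Omega_{0}}$. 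Moreover, since $\eta\equiv 1$ near $\partial D_{1}^{h}$ (more precisely in $\mathcal{B}_{1+|h|}^{(\widetilde m)}(3e_{n})$) we have $\partial_{x}\eta=0$ on $\partial D_{1}^{0}\cup\partial D_{2}$, so $\partial_{h}^{s}\tilde{A}_{ij}^{\alpha\beta}=0$ on $\partial D_{1}^{0}\cup\partial D_{2}$ for every $s\geq 1$, and $\tilde{A}_{ij}^{\alpha\beta}=A_{ij}^{\alpha\beta}$ there for $s=0$.

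First I would record that $F$ is affine-linear in $\tilde v$ for each fixed $h$ — indeed $F$ is linear in $\tilde v$ — so that computing all Fréchet derivatives in $\tilde v$ reduces to bounding $F$ and its $h$-derivatives as linear maps $X\to Y$. Then I would write, for each component $i$ and each $s\geq 0$,
\begin{align*}
\partial_{h}^{s}F_{i}(h,\tilde v)=
\begin{cases}
\partial_{\alpha}(\partial_{h}^{s}\tilde{A}_{ij}^{\alpha\beta}\,\partial_{\beta}\tilde v^{j}),&\text{in }\Omega_{0},\\
\partial_{\alpha}(A_{ij}^{\alpha\beta}\partial_{\beta}\tilde v^{j}),&\text{in }D_{1}^{0}\cup D_{2}\text{ if }s=0,\\
0,&\text{in }D_{1}^{0}\cup D_{2}\text{ if }s\geq1,
\end{cases}
\end{align*}
which lies in $L^{2}(D;\mathbb{R}^{n})\subseteq H^{-1}(D;\mathbb{R}^{n})$ because $\tilde v\in X$ has $\nabla\tilde v\in L^{\infty}(\tfrac12 D)\cap L^{2}(D)$. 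Next, for any test field $w\in H_{0}^{1}(D;\mathbb{R}^{n})$ with $\|w\|_{H^{1}_{0}}\leq 1$, I would pair $\langle\partial_{h}^{s}F(h,\tilde v),w\rangle$, integrate by parts separately over $\Omega_{0}$ and over $D_{1}^{0}\cup D_{2}$, and control the resulting bulk integrals by $\|\nabla\tilde v\|_{L^{2}(D)}\|\nabla w\|_{L^{2}(D)}$ and the boundary integrals over $\partial D_{1}^{0}\cup\partial D_{2}$ by $\|\nabla\tilde v\|_{L^{\infty}(\frac12 D)}\|w\|_{L^{2}(\partial D_{1}^{0}\cup\partial D_{2})}$, the latter absorbed via the Trace Theorem into $C\|w\|_{H^{1}_{0}(D)}$; for $s\geq 1$ the boundary term vanishes outright since $\partial_{h}^{s}\tilde A=0$ there, and for $s=0$ the conormal traces from $\Omega_{0}$ and from $D_{1}^{0}\cup D_{2}$ combine just as in the scalar cases $k=2,3$. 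The net conclusion is $\|\partial_{h}^{s}F(h,\tilde v)\|_{Y}\leq C(s,m,n,\Omega_{0})\|\tilde v\|_{X}$ for all $\tilde v\in X$, uniformly in $h\in K$, so each $\partial_{h}^{s}F(h,\cdot):X\to Y$ is a bounded linear operator with norm bounded uniformly on $K$. By the standard criterion (a map whose partial $h$-derivatives of all orders exist as bounded operators with locally bounded norms, and which is linear — hence smooth — in the remaining variable) $F\in C^{\infty}(K\times X,Y)$, exactly as in Proposition \ref{QAT001}.

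The only genuinely new bookkeeping, and the step I expect to need the most care, is making sure the integration-by-parts identity is the correct one for the Lamé operator on the space $X$: the natural conormal derivative is $A_{ij}^{\alpha\beta}\partial_{\beta}\tilde v^{j}\nu_{\alpha}$, and one must check that the boundary contributions pair against $w$ in a way compatible with the constraints defining $X$ (the transmission conditions $u|_{+}=u|_{-}$ on $\partial D_{1}^{0}\cup\partial D_{2}$ and the rigid-displacement integral conditions), so that no uncontrolled terms survive. This is purely a matter of tracking the index contractions; the symmetry and strong ellipticity of $A_{ij}^{\alpha\beta}$ (guaranteed by $\mu,\,n\lambda+2\mu\in(0,\infty)$) are not even needed for this lemma, only boundedness of the coefficients, so the estimates go through verbatim. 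I would therefore present the proof as three short displays — the formula for $\partial_{h}^{s}F_{i}$, the pairing estimate for $s=0$, and the pairing estimate for $s\geq 1$ using $\partial_{h}^{s}\tilde A=0$ on the interfaces — and then invoke the functional-analytic criterion to conclude.
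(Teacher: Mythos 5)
Your proposal is correct and follows essentially the same route as the paper: write out $\partial_{h}^{s}F$ explicitly, use linearity in $\tilde v$ and the uniform bounds $|\partial_{h}^{s}\tilde{A}_{ij}^{\alpha\beta}|\leq C(s,m,n)$ on $\overline{K\times\Omega_{0}}$, integrate by parts against $w\in H^{1}_{0}(D;\mathbb{R}^{n})$, control the interface term via the trace theorem and the bulk term via H\"older, and conclude $\|\partial_{h}^{s}F(h,\cdot)\|_{X\to Y}\leq C$ uniformly on $K$. The only (harmless) slip is that the correct scalar analogue here is the perfect-conductivity case $k=1$ rather than $k=2,3$: since $X$ forces $e(\tilde v)=0$ in $\overline{D_{1}^{0}\cup D_{2}}$, the contribution from the inclusions is identically zero for $s=0$ as well, so no conormal traces from inside need to ``combine.''
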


\begin{proof}
To begin with, for any $k\geq0$, $h\in K$ and $\tilde{v}\in X$, we have
\begin{align*}
\partial_{h}^{k}F(h,\tilde{v})=
\begin{cases}
(\partial_\alpha(\partial_{h}^{k}\tilde{A}_{1j}^{\alpha\beta}\partial_\beta \tilde{v}^j),...,\partial_\alpha(\partial_{h}^{k}\tilde{A}_{nj}^{\alpha\beta}\partial_\beta \tilde{v}^j)),& \mathrm{in}\; \Omega_{0},\\
(0,...,0),&\mathrm{in}\;D\setminus\Omega_{0}.
\end{cases}
\end{align*}
Observe that for each $h\in K$, $\partial_{h}^{k}F$ is linear in $\tilde{v}$. By a direct calculation, we obtain that for $i,j,\alpha,\beta=1,2,...,n,$ $|\partial_{h}^{k}\tilde{A}_{ij}^{\alpha\beta}|\leq C(k,m,n)$ in $\overline{K\times\Omega_{0}}$. This, in combination with the fact that $\tilde{v}\in X$, implies that $\partial_{h}^{k}F(h,\tilde{v})\in L^{2}(D;\mathbb{R}^{n})$. Then we deduce from integration by parts, H\"{o}lder's inequality and Trace Theorem that for all $w\in H_{0}^{1}(D;\mathbb{R}^{n})$, $\|w\|_{H^{1}_{0}(D;\mathbb{R}^{n})}\leq1$,
\begin{align*}
|\langle \partial_{h}^{k}F,w\rangle|=&\left|\int_{D}\partial_{h}^{k}F(h,\tilde{v})w\,dy\right|=\left|\int_{\Omega_{0}}\partial_{h}^{k}F(h,\tilde{v})w\,dy\right|\notag\\
=&\left|\int_{\partial D_{1}^{0}\cup\partial D_{2}}\partial_{h}^{k}\tilde{A}_{ij}^{\alpha\beta}\partial_\beta \tilde{v}^j\nu_{\alpha}w^{i}-\int_{\Omega_{0}}\partial_{h}^{k}\tilde{A}_{ij}^{\alpha\beta}\partial_\beta \tilde{v}^j\partial_{\alpha}w^{i}dy\right|\notag\\
\leq&C(\|\nabla\tilde{v}\|_{L^{\infty}(\frac{1}{2}D;\mathbb{R}^{n})}\|w\|_{L^{2}(\partial D_{1}^{0}\cup\partial D_{2})}+\|\nabla \tilde{v}\|_{L^{2}(\Omega_{0};\mathbb{R}^{n})}\|\nabla w\|_{L^{2}(D;\mathbb{R}^{n})})\notag\\
\leq& C(\|\nabla\tilde{v}\|_{L^{\infty}(\frac{1}{2}D;\mathbb{R}^{n})}+\|\nabla\tilde{v}\|_{L^{2}(\Omega_{0};\mathbb{R}^{n})})\|w\|_{H^{1}_{0}(D;\mathbb{R}^{n})}\leq C\|\tilde{v}\|_{X}.
\end{align*}
This yields that $\|\partial_{h}^{k}F(h,\tilde{v})\|_{Y}\leq C(k,m,n)\|\tilde{v}\|_{X}$ for all $\tilde{v}\in X$. Hence, for any $h\in K$, $\partial_{h}^{k}F(h,\cdot):X\rightarrow Y$ is a bounded linear operator with uniformly bounded norm on $K$. Then it follows from standard theories in functional analysis that $F$ is a $C^{\infty}$ map from $K\times X$ to $Y$.

\end{proof}

Observe that $F(h,\tilde{u}+\tilde{v})-F(h,\tilde{u})=F(h,\tilde{v}):=\mathcal{L}^{h}_{\tilde{u}}\tilde{v}$, where $h\in K$ and $\tilde{u},\tilde{v}\in X$. Therefore, we know that the linear bounded operator $\mathcal{L}^{h}_{\tilde{u}}:X\rightarrow Y$ is the Fr\'{e}chet derivative of $F$ with respect to $\tilde{u}$ at $(h,\tilde{u}).$ Moreover, we have $\mathcal{L}^{0}_{u_{0}}\tilde{v}=(\partial_\alpha(A_{1j}^{\alpha\beta}\partial_\beta \tilde{v}^j),...,\partial_\alpha(A_{nj}^{\alpha\beta}\partial_\beta \tilde{v}^j))$ and $\mathcal{L}^{0}_{u_{0}}u_{0}=0.$ For later convenience, we first rewrite the original problem \eqref{La.002} as follows:
\begin{align}\label{AAE001}
\begin{cases}
\nabla\cdot(\mathbb{C}^0e(u_{h}))=0,&\mathrm{in}\;\Omega_{h},\\
u_{h}|_{+}=u_{h}|_{-},&\mathrm{on}\ \partial D_{1}^{h}\cup\partial D_{2},\\
u_{h}=\sum^{\frac{n(n+1)}{2}}_{\alpha=1}C_{1\alpha}^{h}\phi_{\alpha},&\mathrm{on}\;\overline{D_{1}^{h}},\\
u_{h}=\sum^{\frac{n(n+1)}{2}}_{\alpha=1}C_{2\alpha}^{h}\phi_{\alpha},&\mathrm{on}\;\overline{D}_{2},\\
\int_{\partial{D}_{1}^{h}}\frac{\partial u_{h}}{\partial \nu_0}\big|_{+}\cdot\phi_{\alpha}=0,&\alpha=1,2,...,\frac{n(n+1)}{2},\\
\int_{\partial{D}_{2}}\frac{\partial u_{h}}{\partial \nu_0}\big|_{+}\cdot\phi_{\alpha}=0,&\alpha=1,2,...,\frac{n(n+1)}{2},\\
u_{h}=\varphi,&\hbox{on}\ \partial{D},
\end{cases}
\end{align}
where $e(u_{h})=\frac{1}{2}(\nabla u_{h}+(\nabla u_{h})^{T})$ is the strain tensor, $\mathbb{C}^{0}=(C^{0}_{ijkl})$ represents the elasticity tensor satisfying $C_{ijkl}^{0}=\lambda\delta_{ij}\delta_{kl} +\mu(\delta_{ik}\delta_{jl}+\delta_{il}\delta_{jk})$, $(\mathbb{C}^{0}e(u_{h}))_{ij}=\sum_{k,l=1}^{n}C_{ijkl}^{0}(e(u_{h}))_{kl}$, $\{\phi_{\alpha}\}_{\alpha=1}^{\frac{n(n+1)}{2}}$ denotes a basis of the linear space of rigid displacement $\Phi:=\{\phi\in C^1(\mathbb{R}^{n}; \mathbb{R}^{n})\,|\,\nabla\phi+(\nabla\phi)^T=0\}$, and
\begin{align}\label{NOTA01}
\frac{\partial u_{h}}{\partial \nu_0}\Big|_{+}&:=(\mathbb{C}^0e(u_{h}))\nu=\lambda(\nabla\cdot v)\nu+\mu(\nabla u_{h}+(\nabla u_{h})^T)\nu.
\end{align}

\begin{lemma}\label{QAT002V2}
The operator $\mathcal{L}^{0}_{u_{0}}:X\rightarrow Y$ is an isomorphism.
\end{lemma}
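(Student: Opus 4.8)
The plan is to mimic the proof of Proposition \ref{QAT002}, adapting the three-step scheme (injectivity via uniqueness for the homogeneous problem, then surjectivity by explicitly constructing a preimage) to the Lam\'e system. Injectivity is immediate: if $\mathcal{L}^{0}_{u_{0}}\tilde{v}_{1}=\mathcal{L}^{0}_{u_{0}}\tilde{v}_{2}$, then $w:=\tilde{v}_{1}-\tilde{v}_{2}$ satisfies $\nabla\cdot(\mathbb{C}^{0}e(w))=0$ in $\Omega_{0}$ with $w=0$ on $\partial D$, and $w$ is a rigid displacement in $\overline{D_{1}^{0}\cup D_{2}}$ with matching transmission and zero-traction-moment conditions across $\partial D_{1}^{0}\cup\partial D_{2}$; multiplying by $w$, integrating by parts and using Korn's inequality together with the conditions defining $X$ (which kill all boundary terms, exactly as the integral conditions in $X^{1}$ do) forces $e(w)=0$ in $\Omega_{0}$, hence $w$ is globally a rigid displacement, and $w=0$ on $\partial D$ then gives $w\equiv0$. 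This is the elasticity analogue of the uniqueness-of-weak-solution argument used in Proposition \ref{QAT002}.

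For surjectivity, given $f\in Y$ (so $f=0$ in $\overline{D_{1}^{0}\cup D_{2}}$ and $f\in C^{0,\gamma}(\overline{\Omega}_{0};\mathbb{R}^{n})$), I would reproduce the construction of Case 1 of Proposition \ref{QAT002} but with the $\tfrac{n(n+1)}{2}$-dimensional space of rigid displacements replacing the one-dimensional space of constants on each inclusion. Concretely: let $\tilde{v}_{0}\in H^{1}(D;\mathbb{R}^{n})\cap C^{2,\gamma}(\overline{\Omega}_{0};\mathbb{R}^{n})$ solve $\nabla\cdot(\mathbb{C}^{0}e(\tilde{v}_{0}))=f$ in $\Omega_{0}$ with $\tilde{v}_{0}=0$ on $\partial D_{1}^{0}\cup\partial D_{2}$ and $\tilde{v}_{0}=\varphi$ on $\partial D$; and for each $\alpha$ let $\tilde{v}_{1\alpha}$ (resp.\ $\tilde{v}_{2\alpha}$) solve the homogeneous Lam\'e equation in $\Omega_{0}$ with datum $\phi_{\alpha}$ on $\overline{D_{1}^{0}}$ (resp.\ $\overline{D}_{2}$), $0$ on the other inclusion and on $\partial D$. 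Set $\tilde{u}=\tilde{v}_{0}+\sum_{\alpha}(C_{1\alpha}\tilde{v}_{1\alpha}+C_{2\alpha}\tilde{v}_{2\alpha})$ and impose the $n(n+1)$ conditions $\int_{\partial D_{i}^{0}}\frac{\partial\tilde{u}}{\partial\nu_{0}}\big|_{+}\cdot\phi_{k}=0$; this is a linear system for the unknowns $(C_{1\alpha},C_{2\alpha})$ whose coefficient matrix, after integration by parts, is the Gram-type matrix $\big(\int_{\Omega_{0}}\mathbb{C}^{0}e(\tilde{v}_{i\alpha}):e(\tilde{v}_{j\beta})\big)$. Extending $\tilde{u}$ into each inclusion by the rigid displacement $\sum_{\alpha}C_{i\alpha}\phi_{\alpha}$ gives an element of $X$ with $\mathcal{L}^{0}_{u_{0}}\tilde{u}=f$.

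The main obstacle is the invertibility of that coefficient matrix (equivalently, that the maps $\phi_{\alpha}\mapsto$ traction functional are linearly independent), which replaces the elementary $2\times2$ determinant $\tilde a_{11}\tilde a_{22}-\tilde a_{12}\tilde a_{21}\ne0$ of Case 1. I would argue it is positive definite: if $\sum_{\alpha}(c_{1\alpha}C_{1\alpha}+c_{2\alpha}C_{2\alpha})$-type combination makes the quadratic form vanish, then the corresponding linear combination $z:=\sum_{\alpha}(c_{1\alpha}\tilde v_{1\alpha}+c_{2\alpha}\tilde v_{2\alpha})$ has $e(z)=0$ in $\Omega_{0}$ by Korn, hence $z$ is a global rigid displacement; since $z=0$ on $\partial D$ this forces $z\equiv0$, and then matching the boundary data $\sum_\alpha c_{1\alpha}\phi_\alpha=0$ on $\overline{D_1^0}$ and $\sum_\alpha c_{2\alpha}\phi_\alpha=0$ on $\overline{D}_2$ gives $c_{i\alpha}=0$ because $\{\phi_{\alpha}\}$ is a basis of $\Phi$. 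The other ingredients—$C^{2,\gamma}$ solvability of the auxiliary Lam\'e boundary-value problems and the $H^{1}$ bound—are standard from the classical elliptic systems theory (the same theory invoked in Cases 1--3 above), so no new difficulty arises there; one only has to note that $f\in C^{0,\gamma}(\overline{\Omega}_{0})$ and the boundary data are smooth enough. Once the matrix is invertible the construction closes and $\mathcal{L}^{0}_{u_{0}}$ is bijective; being a bounded linear bijection between Banach spaces, it is an isomorphism by the open mapping theorem, completing the proof.
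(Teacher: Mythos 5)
Your proposal follows essentially the same route as the paper: injectivity from uniqueness of the weak solution, and surjectivity by solving the auxiliary Dirichlet problems for $\tilde{v}_{0}$ and $\tilde{v}_{i\alpha}$ and determining the rigid-displacement coefficients $C_{i\alpha}$ from the traction-moment conditions via the resulting linear system. Your additional argument that the Gram-type coefficient matrix is positive definite (via Korn's inequality and the vanishing of rigid displacements on $\partial D$) is a welcome extra justification of a step the paper passes over silently when it invokes Cramer's rule.
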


\begin{proof}
Similar to Proposition \ref{QAT002}, we deduce from the uniqueness of weak solution that $\mathcal{L}^{0}_{u_{0}}:X\rightarrow Y$ is injective. It remains to show that $\mathcal{L}^{0}_{u_{0}}:X\rightarrow Y$ is also surjective. For any $f\in Y$, let $\tilde{v}_{0}\in C^{2,\gamma}(\overline{\Omega}_{0};\mathbb{R}^{n})$ and $\tilde{v}_{i\alpha}\in C^{2,\gamma}(\overline{\Omega}_{0};\mathbb{R}^{n})$, $i=1,2,\,\alpha=1,2,...,\frac{n(n+1)}{2}$ be, respectively, the solutions of
\begin{align*}
\begin{cases}
\nabla\cdot(\mathbb{C}^0e(\tilde{v}_{0}))=f,&\mathrm{in}\;\Omega_{0},\\
\tilde{v}_{0}=0,&\mathrm{on}\;\overline{D_{1}^{0}\cup D_{2}},\\
\tilde{v}_{0}=\varphi,&\mathrm{on}\;\partial D,
\end{cases}
\end{align*}
and
\begin{align*}
\begin{cases}
\nabla\cdot(\mathbb{C}^0e(\tilde{v}_{1\alpha}))=0,&\mathrm{in}\;\Omega_{0},\\
\tilde{v}_{1\alpha}=\phi_{\alpha},&\mathrm{on}\;\overline{D_{1}^{0}},\\
\tilde{v}_{1\alpha}=0,&\mathrm{on}\;\overline{D}_{2}\cup\partial D,
\end{cases}\;\,\begin{cases}
\nabla\cdot(\mathbb{C}^0e(\tilde{v}_{2\alpha}))=0,&\mathrm{in}\;\Omega_{0},\\
\tilde{v}_{2\alpha}=\phi_{\alpha},&\mathrm{on}\;\overline{D}_{2},\\
\tilde{v}_{2\alpha}=0,&\mathrm{on}\;\overline{D_{1}^{0}}\cup\partial D,
\end{cases}
\end{align*}
where the elasticity tensor $\mathbb{C}^{0}$ is given in \eqref{AAE001}. For $i=1,2$ and $\alpha,\beta=1,2,...,\frac{n(n+1)}{2}$, define
\begin{align*}
\tilde{a}_{i1\alpha\beta}:=&\int_{\partial{D}_{1}^{0}}\frac{\partial \tilde{v}_{i\alpha}}{\partial \nu_0}\Big|_{+}\cdot\phi_{\beta},\quad \tilde{a}_{i2\alpha\beta}:=\int_{\partial{D}_{2}}\frac{\partial \tilde{v}_{i\alpha}}{\partial \nu_0}\Big|_{+}\cdot\phi_{\beta},\notag\\ \tilde{b}_{1\beta}^{h}:=\tilde{b}_{1\beta}[f,\varphi]=&-\int_{\partial D_{1}^{0}}\frac{\partial \tilde{v}_{0}}{\partial \nu_0}\Big|_{+}\cdot\phi_{\beta},\quad \tilde{b}_{2\beta}:=\tilde{b}_{2\beta}[f,\varphi]=-\int_{\partial D_{2}}\frac{\partial \tilde{v}_{0}}{\partial \nu_0}\Big|_{+}\cdot\phi_{\beta},
\end{align*}
where the notation $\frac{\partial}{\partial\nu_{0}}|_{+}$ is defined by \eqref{NOTA01}. For $i,j=1,2,$ denote $\tilde{\mathbb{A}}_{ij}=(\tilde{a}_{ij\alpha\beta})_{\frac{n(n+1)}{2}\times\frac{n(n+1)}{2}}$ and $Y_{i}=\big(b_{i1}^{h},b_{i2}^{h},...,b_{i\frac{n(n+1)}{2}}^{h}\big)^{T}$. For $i,j=1,2$, and $\alpha=1,2,...,\frac{n(n+1)}{2}$, we let $(Y^{1},Y^{2})^{T}$ substitute for the elements of $\alpha$-th column of the matrix $\tilde{\mathbb{A}}_{ij}$ and obtain new matrix $\tilde{\mathbb{A}}_{ij\alpha}$ as follows:
\begin{gather*}
\tilde{\mathbb{A}}_{ij\alpha}=
\begin{pmatrix}
\tilde{a}_{ij11}&\cdots&\tilde{b}_{j1}&\cdots&\tilde{a}_{ij1\frac{n(n+1)}{2}} \\\\ \vdots&\ddots&\vdots&\ddots&\vdots\\\\ \tilde{a}_{ij\frac{n(n+1)}{2}1}&\cdots&\tilde{b}_{j\frac{n(n+1)}{2}}&\cdots&\tilde{a}_{ij\frac{n(n+1)}{2}\frac{n(n+1)}{2}}
\end{pmatrix}.
\end{gather*}
For $\alpha=1,2,...,\frac{n(n+1)}{2}$, let
\begin{align*}
\widetilde{\mathbb{F}}_{1\alpha}=\begin{pmatrix} \tilde{\mathbb{A}}_{11\alpha}&\tilde{\mathbb{A}}_{21} \\  \tilde{\mathbb{A}}_{12\alpha}&\tilde{\mathbb{A}}_{22}
\end{pmatrix},\;\,\widetilde{\mathbb{F}}_{2\alpha}=\begin{pmatrix} \tilde{\mathbb{A}}_{11}&\tilde{\mathbb{A}}_{21\alpha} \\  \tilde{\mathbb{A}}_{12}&\tilde{\mathbb{A}}_{22\alpha}
\end{pmatrix},\;\, \widetilde{\mathbb{F}}=\begin{pmatrix} \tilde{\mathbb{A}}_{11}&\tilde{\mathbb{A}}_{21} \\  \tilde{\mathbb{A}}_{12}&\tilde{\mathbb{A}}_{22}
\end{pmatrix}.
\end{align*}
Denote
\begin{align*}
\widetilde{C}_{i\alpha}:=\widetilde{C}_{i\alpha}[f,\varphi]=\frac{\det\widetilde{\mathbb{F}}_{i\alpha}}{\det\widetilde{\mathbb{F}}},\quad i=1,2,\,\alpha=1,2,...,\frac{n(n+1)}{2}.
\end{align*}
In light of $f\in C^{0,\gamma}(\overline{\Omega}_{0};\mathbb{R}^{n})$ and utilizing the classical elliptic theories, we obtain that there exists a unique solution $\tilde{v}\in H^{1}(D)\cap C^{2,\gamma}(\overline{\Omega}_{0})$ satisfying the following Dirichlet problem
\begin{align*}
\begin{cases}
\nabla\cdot(\mathbb{C}^0e(\tilde{v}))=0,&\mathrm{in}\;\Omega_{0},\\
\tilde{v}=\sum^{\frac{n(n+1)}{2}}_{\alpha=1}\widetilde{C}_{1\alpha}\phi_{\alpha},&\mathrm{on}\;\overline{D_{1}^{0}},\\
\tilde{v}=\sum^{\frac{n(n+1)}{2}}_{\alpha=1}\widetilde{C}_{2\alpha}\phi_{\alpha},&\mathrm{on}\;\overline{D}_{2},\\
\tilde{v}=\varphi,&\hbox{on}\ \partial{D}.
\end{cases}
\end{align*}
Observe that
\begin{align*}
\tilde{v}=\sum^{\frac{n(n+1)}{2}}_{\alpha=1}\sum^{2}_{i=1}\widetilde{C}_{i\alpha}\tilde{v}_{i\alpha}+\tilde{v}_{0},\quad\mathrm{in}\;\Omega_{0}.
\end{align*}
It then follows from a straightforward computation that $\int_{\partial{D}_{1}^{0}}\frac{\partial \tilde{v}}{\partial \nu_0}\big|_{+}\cdot\phi_{\alpha}=\int_{\partial{D}_{2}}\frac{\partial \tilde{v}}{\partial \nu_0}\big|_{+}\cdot\phi_{\alpha}=0$, $\alpha=1,2,...,\frac{n(n+1)}{2}$. Then $\tilde{v}\in X$ solves $\mathcal{L}^{0}_{u_{0}}\tilde{v}=f$. The proof is finished.

\end{proof}

Based on the above facts, we now present the proof of Theorem \ref{thm005}.
\begin{proof}[Proof of Theorem \ref{thm005}]
A combination of the implicit function theorem and Lemmas \ref{QAT001V2} and \ref{QAT002V2} shows that there exists a small positive constant $h_{0}=h_{0}(\Omega_{0},\|\varphi\|_{C^{2}(\partial D)})$ such that $\tilde{u}_{h}\in C^{\infty}((-h_{0},h_{0});\mathbb{R}^{n})$, where $\tilde{u}_{h}$ is the solution for problem \eqref{ZWQ001}. By letting $u_{h}(x)=\tilde{u}_{h}(y)$ with $y=\psi(x)$, we derive that $u_{h}\in C^{\infty}((-h_{0},h_{0});\mathbb{R}^{n})$ solves equation \eqref{La.002}. We now proceed to study the stability of $u_{h}$ with respect to $h$. For simplicity, denote $v(h,x):=u_{h}(x)$ in the following.

For $h\in(-h_{0},h_{0})$, we split the solution $v(h,x)$ of problem \eqref{AAE001} as follows:
\begin{align}\label{DEC002}
v(h,x)=\sum^{\frac{n(n+1)}{2}}_{\alpha=1}\sum^{2}_{i=1}C_{i\alpha}^{h}v_{i\alpha}(h,x)+v_{0}(h,x),\quad\mathrm{in}\;\Omega_{h},
\end{align}
where $v_{0}$ and $v_{i\alpha}$, $i=1,2,\,\alpha=1,2,...,\frac{n(n+1)}{2}$, respectively, satisfy
\begin{align*}
\begin{cases}
\nabla\cdot(\mathbb{C}^0e(v_{0}))=0,&\mathrm{in}\;\Omega_{h},\\
v_{0}=0,&\mathrm{on}\;\overline{D_{1}^{h}\cup D_{2}},\\
v_{0}=\varphi,&\mathrm{on}\;\partial D,
\end{cases}
\end{align*}
and
\begin{align*}
\begin{cases}
\nabla\cdot(\mathbb{C}^0e(v_{1\alpha}))=0,&\mathrm{in}\;\Omega_{h},\\
v_{1\alpha}=\phi_{\alpha},&\mathrm{on}\;\overline{D_{1}^{h}},\\
v_{1\alpha}=0,&\mathrm{on}\;\overline{D}_{2}\cup\partial D,
\end{cases}\;\,\begin{cases}
\nabla\cdot(\mathbb{C}^0e(v_{2\alpha}))=0,&\mathrm{in}\;\Omega_{h},\\
v_{2\alpha}=\phi_{\alpha},&\mathrm{on}\;\overline{D}_{2},\\
v_{2\alpha}=0,&\mathrm{on}\;\overline{D_{1}^{h}}\cup\partial D.
\end{cases}
\end{align*}
For $i=1,2$ and $\alpha,\beta=1,2,...,\frac{n(n+1)}{2}$, denote
\begin{align*}
a_{i1\alpha\beta}^{h}:=&\int_{\partial{D}_{1}^{h}}\frac{\partial v_{i\alpha}}{\partial \nu_0}\Big|_{+}\cdot\phi_{\beta},\quad a_{i2\alpha\beta}^{h}:=\int_{\partial{D}_{2}}\frac{\partial v_{i\alpha}}{\partial \nu_0}\Big|_{+}\cdot\phi_{\beta} ,\notag\\ b_{1\beta}^{h}:=&-\int_{\partial D_{1}^{h}}\frac{\partial v_{0}}{\partial \nu_0}\Big|_{+}\cdot\phi_{\beta},\quad b_{2\beta}^{h}:=-\int_{\partial D_{2}}\frac{\partial v_{0}}{\partial \nu_0}\Big|_{+}\cdot\phi_{\beta}.
\end{align*}
Then substituting \eqref{DEC002} into the fifth and sixth lines of \eqref{AAE001}, we obtain that for $\beta=1,2,...,\frac{n(n+1)}{2},$
\begin{align}\label{AHNTW009}
\begin{cases}
\sum\limits_{\alpha=1}^{\frac{n(n+1)}{2}}\sum\limits^{2}_{i=1}C_{i\alpha}^{h} a_{i1\alpha\beta}^{h}=b_{1\beta}^{h},\\
\sum\limits_{\alpha=1}^{\frac{n(n+1)}{2}}\sum\limits^{2}_{i=1}C_{i\alpha}^{h} a_{i2\alpha\beta}^{h}=b_{2\beta}^{h}.
\end{cases}
\end{align}
For brevity, denote $\mathbb{A}^{h}_{ij}=(a_{ij\alpha\beta}^{h})_{\frac{n(n+1)}{2}\times\frac{n(n+1)}{2}}$, $i,j=1,2,$ and
\begin{align*}
X_{i}=\big(C_{i1}^{h},C_{i2}^{h},...,C_{i\frac{n(n+1)}{2}}^{h}\big)^{T},\;\,Y_{i}=\big(b_{i1}^{h},b_{i2}^{h},...,b_{i\frac{n(n+1)}{2}}^{h}\big)^{T},\quad i=1,2.
\end{align*}
Therefore, \eqref{AHNTW009} becomes
\begin{gather*}
\begin{pmatrix} \mathbb{A}_{11}^{h}&\mathbb{A}_{21}^{h} \\  \mathbb{A}_{12}^{h}&\mathbb{A}_{22}^{h}
\end{pmatrix}
\begin{pmatrix}
X_{1}\\
X_{2}
\end{pmatrix}=
\begin{pmatrix}
Y_{1}\\
Y_{2}
\end{pmatrix}.
\end{gather*}
For $i,j=1,2,\,\alpha=1,2,...,\frac{n(n+1)}{2}$, by replacing the elements of $\alpha$-th column in the matrix $\mathbb{A}_{ij}^{h}$ by column vector $(Y^{1},Y^{2})^{T}$, we obtain new matrix $\mathbb{A}_{ij\alpha}^{h}$ as follows:
\begin{gather*}
\mathbb{A}_{ij\alpha}^{h}=
\begin{pmatrix}
a_{ij11}^{h}&\cdots&b_{j1}^{h}&\cdots&a_{ij1\frac{n(n+1)}{2}}^{h} \\\\ \vdots&\ddots&\vdots&\ddots&\vdots\\\\a_{ij\frac{n(n+1)}{2}1}^{h}&\cdots&b_{j\frac{n(n+1)}{2}}^{h}&\cdots&a_{ij\frac{n(n+1)}{2}\frac{n(n+1)}{2}}^{h}
\end{pmatrix}.
\end{gather*}
For $\alpha=1,2,...,\frac{n(n+1)}{2}$, write
\begin{align*}
\mathbb{F}^{h}_{1\alpha}=\begin{pmatrix} \mathbb{A}^{h}_{11\alpha}&\mathbb{A}^{h}_{21} \\  \mathbb{A}^{h}_{12\alpha}&\mathbb{A}^{h}_{22}
\end{pmatrix},\;\,\mathbb{F}^{h}_{2\alpha}=\begin{pmatrix} \mathbb{A}^{h}_{11}&\mathbb{A}^{h}_{21\alpha} \\  \mathbb{A}^{h}_{12}&\mathbb{A}^{h}_{22\alpha}
\end{pmatrix},\;\, \mathbb{F}^{h}=\begin{pmatrix} \mathbb{A}^{h}_{11}&\mathbb{A}^{h}_{21} \\  \mathbb{A}^{h}_{12}&\mathbb{A}^{h}_{22}
\end{pmatrix}.
\end{align*}
It then follows from Cramer's rule that
\begin{align*}
C_{i\alpha}^{h}=\frac{\det\mathbb{F}^{h}_{i\alpha}}{\det\mathbb{F}^{h}},\quad i=1,2,\,\alpha=1,2,...,\frac{n(n+1)}{2}.
\end{align*}

The following proof is similar to that in the scalar case above. First, it follows from the mean value theorem and the boundary estimates for elliptic systems that for $x\in\partial D_{1}^{h}\setminus D_{1}^{0}$,
\begin{align*}
|v_{i\alpha}(h,x)-v_{i\alpha}(0,x)|\leq|v_{i\alpha}(0,x',x_{n}-h)-v_{i\alpha}(0,x',x_{n})|+|h|\leq C|h|,
\end{align*}
and for $x\in\partial D_{1}^{0}\setminus D_{1}^{h}$,
\begin{align*}
|v_{i\alpha}(h,x)-v_{i\alpha}(0,x)|\leq|v_{i\alpha}(h,x',x_{n})-v_{i\alpha}(h,x',x_{n}+h)|+|h|\leq C|h|.
\end{align*}
In view of the fact that $v_{i\alpha}(h,x)-v_{i\alpha}(0,x)=0$ on $\partial D_{2}\cup\partial D$, we then have
\begin{align*}
|v_{i\alpha}(h,x)-v_{i\alpha}(0,x)|\leq C|h|,\quad\mathrm{on}\;\partial(D\setminus\overline{D_{1}^{0}\cup D_{1}^{h}\cup D_{2}}).
\end{align*}
From the maximum modulus principle, we derive
\begin{align*}
|v_{i\alpha}(h,x)-v_{i\alpha}(0,x)|\leq C|h|,\quad\mathrm{in}\;D\setminus\overline{D_{1}^{0}\cup D_{1}^{h}\cup D_{2}}.
\end{align*}
Then we deduce from the rescale argument and the standard interior and boundary estimates for elliptic systems that
\begin{align}\label{ZKA001}
\|v_{i\alpha}(h,\cdot)-v_{i\alpha}(0,\cdot)\|_{C^{2}(\overline{D\setminus (D_{1}^{0}\cup D_{1}^{h}\cup D_{2})})}\leq C|h|.
\end{align}
Since $v_{i\alpha}(h,x)=v_{i\alpha}(0,x)$ in $\overline{D_{1}^{0}\cap D_{1}^{h}}$, it then follows from the standard elliptic estimates that for $x\in D_{1}^{h}\setminus\overline{D_{1}^{0}}$,
\begin{align*}
&|v_{i\alpha}(h,x',x_{n})-v_{i\alpha}(0,x',x_{n})|\notag\\
&\leq\bigg|v_{i\alpha}(0,x',x_{n})-v_{i\alpha}\Big(0,x',3+\mathrm{sgn}(h)\Big(1-\sum^{n-1}_{i=1}|x_{i}|^{m}\Big)^{1/m}\Big)\bigg|+|h|\leq C|h|,
\end{align*}
and for $x\in D_{1}^{0}\setminus\overline{D_{1}^{h}}$,
\begin{align*}
&|v_{i\alpha}(h,x',x_{n})-v_{i\alpha}(0,x',x_{n})|\notag\\
&\leq\bigg|v_{i\alpha}(h,x',x_{n})-v_{i\alpha}\Big(h,x',3+h-\mathrm{sgn}(h)\Big(1-\sum^{n-1}_{i=1}|x_{i}|^{m}\Big)^{1/m}\Big)\bigg|+|h|\leq C|h|,
\end{align*}
where $\mathrm{sgn}(h)$ is the sign function in $h$. A consequence of these above facts shows that for $h\in(-h_{0},h_{0})$, $i=1,2,\,\alpha=1,2,...,\frac{n(n+1)}{2}$, $\|v_{i\alpha}(h,\cdot)-v_{i\alpha}(0,\cdot)\|_{L^{\infty}(\overline{D})}=O(|h|)$. By the same argument, we also have $\|v_{0}(h,\cdot)-v_{0}(0,\cdot)\|_{L^{\infty}(\overline{D})}=O(|h|)$.

Observe from integration by parts that for $i,j=1,2$ and $\alpha,\beta=1,2,...,\frac{n(n+1)}{2}$,
\begin{align*}
a_{ij\alpha\beta}^{h}=\int_{\Omega_{h}}(\mathbb{C}^{0}e(v_{i\alpha}),e(v_{j\beta})),\quad b_{i\alpha}^{h}=-\int_{\Omega_{h}}(\mathbb{C}^{0}e(v_{0}),e(v_{i\alpha})).
\end{align*}
From \eqref{ZKA001}, we obtain that for $i,j=1,2,\,\alpha,\beta=1,2,...,\frac{n(n+1)}{2}$,
\begin{align*}
a_{ij\alpha\beta}^{h}=&\int_{D\setminus\overline{D_{1}^{0}\cup D_{1}^{h}\cup D_{2}}}(\mathbb{C}^{0}e(v_{i\alpha}(h,x)),e(v_{j\beta}(h,x)))+O(|h|)\notag\\
=&\int_{D\setminus\overline{D_{1}^{0}\cup D_{1}^{h}\cup D_{2}}}(\mathbb{C}^{0}e(v_{i\alpha}(h,x)-v_{i\alpha}(0,x)),e(v_{j\beta}(h,x)))\notag\\
&+\int_{D\setminus\overline{D_{1}^{0}\cup D_{1}^{h}\cup D_{2}}}(\mathbb{C}^{0}e(v_{i\alpha}(0,x)),e(v_{j\beta}(h,x)-v_{j\beta}(0,x)))\notag\\
&+\int_{D\setminus\overline{D_{1}^{0}\cup D_{1}^{h}\cup D_{2}}}(\mathbb{C}^{0}e(v_{i\alpha}(0,x)),e(v_{j\beta}(0,x)))+O(|h|)\notag\\
=&a_{ij\alpha\beta}^{0}+O(|h|).
\end{align*}
In exactly the same way, we obtain $b_{i\alpha}^{h}=b_{i\alpha}^{0}+O(|h|)$, $i=1,2,\,\alpha=1,2,...,\frac{n(n+1)}{2}$. Combining these above facts, we deduce that $C_{i\alpha}^{h}=C_{i\alpha}^{0}+O(|h|)$, $i=1,2,\,\alpha=1,2,...,\frac{n(n+1)}{2}$. Since
\begin{align*}
v(h,x)-v(0,x)=&\sum^{2}_{i=1}\sum^{\frac{n(n+1)}{2}}_{\alpha=1}\left(C_{i\alpha}^{h}(v_{i\alpha}(h,x)-v_{i\alpha}(0,x))+(C_{i\alpha}^{h}-C_{i\alpha}^{0})v_{i\alpha}(0,x)\right)\notag\\
&+v_{0}(h,x)-v_{0}(0,x),
\end{align*}
it then follows from the above facts that
\begin{align*}
\|v(h,\cdot)-v(0,\cdot)\|_{L^{\infty}(\overline{D})}=O(|h|),\quad h\in(-h_{0},h_{0}).
\end{align*}

\end{proof}

\noindent{\bf{\large Acknowledgements.}}
The author is deeply grateful to Professor JinGang Xiong for asking this question and giving thorough guidance and numerous supports. The author also would like to thank Zhuolun Yang for his useful discussions and suggestions which greatly improve the proofs. The author was partially supported by CPSF (2021M700358).

\bibliographystyle{plain}

\def\cprime{$'$}

\end{document}